\newtheorem{theorem}{Theorem}
\newtheorem{conjecture}{Conjecture}
\newtheorem{lemma}{Lemma}
\theoremstyle{definition}
\newtheorem*{definition}{Definition}
\newtheorem*{example}{Example}
\newtheorem*{kneadingexample}{Kneading Example}
\DeclareMathOperator{\SL}{\mathrm{SL}}
\newcommand{\OR}[1]{{\overrightarrow{#1}}}
\begin{document}

\title{Reducing Quadratic Forms by Kneading Sequences}

\author{Barry R. Smith}
\email{barsmith@lvc.edu}
\address{Department of Mathematical Sciences \\ Lebanon Valley College\\
Annville, PA, USA}
\subjclass[2000]{Primary 11E25; Secondary 11A05}
\keywords{continued fraction, binary quadratic form}

\begin{abstract}
We introduce an invertible operation on finite sequences of positive integers and call it ``kneading''.  Kneading preserves three invariants of sequences --  the parity of the length, the sum of the entries, and one we call the ``alternant''.  We provide a bijection between the set of sequences with alternant $a$ and parity $s$ and the set of Zagier-reduced indefinite binary quadratic forms with discriminant $a^2 + (-1)^s \cdot 4$, and show that kneading corresponds to Zagier reduction of the corresponding forms.  It follows that the sum of a sequence is a class invariant of the corresponding form.  We conclude with some observations and conjectures concerning this new invariant.
\end{abstract}

\maketitle

\section{Kneading sequences}\label{S:intro}

We \emph{pinch} the left end of a finite sequence of positive integers by transforming it through the rule:
\begin{equation*}
	(x, y, z, \ldots) \mapsto
	\begin{cases}
		 (1, x-1, y, z, \ldots), \quad &\text{if $x \geq 2$,}\\
		 (y+1, z, \ldots), \quad &\text{if $x = 1$.}
	\end{cases}
\end{equation*}
We pinch the right end similarly.

We will \emph{knead} a finite sequence of positive integers by
\begin{itemize}
	\item Removing the leftmost entry, then
	\item Pinching both ends of what remains, then
	\item Placing the removed entry on the right end of the result
\end{itemize}
We note that in the second step, the ends can be pinched in either order and yield the same result. 

Let us clarify what happens with sequences of length $1$ or $2$.  We make the convention that the sequence $(1)$ and the empty sequence are pinched by doing nothing.  Then kneading fixes the sequences of length $1$, and kneading the sequence $(a,b)$ will give the sequence $(1,b-2,1,a)$ when $b \geq 3$ and $(b,a)$ when $b=1$ or $2$.

Kneading is an invertible process -- simply remove the rightmost entry, pinch both ends of what remains, then place the removed entry on the left end of the result.  It should also be apparent that kneading preserves the sum of the entries of a sequence.  Thus, kneading permutes the finitely many sequences with a given sum, and each sequence lies in a finite \emph{kneading cycle}.

\begin{kneadingexample}
Repeated kneading of the sequence $(2,2,3,6)$ gives:
\begin{align*}
	(2,2,3,6) &\mapsto (1,1,3,5,1,2) \mapsto (4,5,1,1,1,1) \mapsto (1,4,1,1,2,4) \mapsto
	(1,3,1,1,2,3,1,1) \notag\\
	&\mapsto (1,2,1,1,2,3,2,1) \mapsto (1,1,1,1,2,3,3,1) \mapsto (2,1,2,3,4,1) \mapsto (3,3,5,2)\\
	&\mapsto (1,2,5,1,1,3)  \mapsto (1,1,5,1,1,2,1,1) \mapsto (6,1,1,2,2,1) \mapsto (2,2,3,6).
\end{align*}
\end{kneadingexample}

Aside from the sum of the entries, kneading preserves two other sequence invariants -- the parity of the length, and a more subtle invariant we call the ``alternant'', which is a positive integer built from continued fractions.  

In our main result, Theorem \ref{T:main}, we provide a one-to-one correspondence between the set of sequences with given alternant $a$ and length parity $s = 0$ or $1$ and the set of Zagier-reduced indefinite binary quadratic forms of discriminant $a^2 + (-1)^s \cdot 4$, and show that kneading corresponds to Zagier reduction of the corresponding forms.  It follows that the number of sequences with alternant $a$ and parity $s$ is finite and that the number of cycles of such sequences is an (imprimitive) class number, the number of classes of quadratic forms of discrimint $a^2 + (-1)^s \cdot 4$, both primitive and imprimitive.

In the final section, we return to study the sum of the entries of sequences.  This invariant is not obvious when viewed from the perspective of quadratic forms.  We do some computations and make some observations and conjectures relating the sum invariant for a cycle of Zagier-reduced forms to the length of the cycle and the class group structure.  The number of forms in cycles under Gauss reduction was studied by Lachaud \cite{gL1984} \cite{gL1987}, and is called the \emph{caliber} of the cycle. To the author's knowledge, no similar study has been made of the caliber of cycles under Zagier reduction.

For the curious, kneading came about while the author was developing a generalization to indefinite forms of the Hardy-Muskat-Williams algorithm \cite{HMW1990} for representing an integer by a positive-definite form.  The correspondence between sequences and Zagier-reduced forms was discovered first.  Once the correspondence was discovered, it became a natural question to see what operation on sequences corresponds to reduction of forms, and the kneading pattern was then spotted.

\section{The correspondence}\label{S:correspondence}

A binary quadratic form is, for us, a polynomial $Ax^2 + Bxy + Cy^2$ in indeterminates $x$ and $y$ with integer coefficients.   We will now refer to them simply as ``forms''.  The question of which integers are obtained by inputting integers into a given form has motivated a tremendous amount of mathematics.  Famous results include Fermat's two squares theorem that the prime numbers represented by the form $x^2 + y^2$ are those congruent to $1$ modulo $4$ and the fact that for each nonsquare number $D > 0$, the Pell equation $x^2 - D y^2 = 1$ has a solution with $y \neq 0$.  

A general study of forms begins with the notion of Lagrange and Gauss of \emph{equivalent forms}.  Two forms are equivalent if one is transformed into the other by acting upon it with a $2 \times 2$ matrix with integer coefficients and determinant $1$, that is, a matrix in the group $\SL_2 (\mathbb{Z})$.  Specifically, a matrix $M = \begin{pmatrix} \alpha & \beta \\ \gamma & \delta \end{pmatrix}$ in $\SL_2 (\mathbb{Z})$ transforms the form $f(x,y)$ into the form
\begin{equation*}
	 f(\alpha x + \beta y, \gamma x + \delta y).
\end{equation*}
This gives a right action of $\SL_2 (\mathbb{Z})$ on the set of all forms.  

The discriminant of the form $Ax^2 + Bxy + Cy^2$ is $D = B^2-4AC$.  The $\SL_2(\mathbb{Z})$-action preserves discriminants, so the forms with fixed discriminant $D$ split into classes of equivalent forms. The first major theorem in the theory of binary quadratic forms is that the number of equivalence classes with given discriminant is finite.  The number of classes of forms all of which have relatively prime coefficients (\emph{primitive} forms) is a {class number}, a central notion of algebraic number theory.

A reduction algorithm is a standard tool for determining when two forms are equivalent.  Reduction is more complicated in the case of interest to us, when $D > 0$.  In fact, there are competing notions of reduction in this case.  We shall use Zagier reduction \cite{dZ1981} rather than the more common reduction of Lagrange and Gauss.

Zagier declares a form $f =Ax^2 + Bxy + Cy^2$ to be \emph{reduced} if
\begin{equation*}
	A > 0, \qquad C > 0, \qquad B > A+C.
\end{equation*}
To perform a reduction step on $f$, we 
\begin{enumerate}
	\item Compute the ``reducing number'', determined as the unique integer $n$ satisfying
\begin{equation*}
	n-1 < \frac{B+\sqrt{D}}{2A} < n,
\end{equation*}
	in which $D$ is the discriminant of $f$, then
	\item Act on the form $f$ with the matrix
\begin{equation*}
	\begin{bmatrix} n & 1 \\ -1 & 0 \end{bmatrix}.
\end{equation*}
\end{enumerate}
Zagier reduction is iteration of reduction steps.  

Because for a reduced form $D \geq D - (B-2A)^2 = 4A(B-A-C) > 0$, we see that the reduced forms with given discriminant $D$ have bounded $A$.  The same inequalities then imply that $B$ must be bounded, hence C must be as well.  There are thus finitely many Zagier-reduced forms with given positive discriminant.  Zagier shows that every form will reach a reduced form after finitely many reduction steps, after which it will continue through a cycle of reduced forms.   He also shows that two reduced forms are equivalent if and only if each can be obtained from the other by reduction, that is, both must lie in the same cycle of forms.  Thus, every equivalence class contains reduced forms, and the number of cycles of reduced primitive forms of given discriminant is a class number.

To define alternants, we turn to continued fractions.  Every rational number $\tfrac{\alpha}{\beta} > 1$ can be expanded in two ways as a finite simple continued fraction
\begin{equation*}
	\frac{\alpha}{\beta} = q_1 + \cfrac{1}{q_2 + \cfrac{1}{\ddots \, + \cfrac{1}{q_l}}}.
\end{equation*}
with positive integer quotients $q_1, \ldots, q_l$.  (Switching between the two expansions is accomplished by pinching the right end of this sequence.)

We will denote the numerator of the continued fraction with sequence of quotients $q_1$, \ldots, $q_l$ by $[q_1, \ldots, q_l]$. When $q_1, \ldots, q_l$ are indeterminates, these numerators are polynomials called \emph{continuants} \cite{tM1878}.
\begin{definition}
The \emph{alternant} of a finite sequence of positive integers $\OR{q} = (q_1, \ldots, q_l)$ with $l \geq 3$ is the difference
\begin{equation*}
	\left[ \OR{q} \right]^{*} :=  \left[ q_1, \ldots, q_l \right] -  \left[ q_2, \ldots, q_{l-1} \right]
\end{equation*}
We define directly the alternant of $(q_1)$ to be $q_1$ and of $(q_1, q_2)$ to be $q_1 q_2$.
\end{definition}

We define the \emph{length parity} of a finite sequence to be $0$ if the number of terms in the sequence is even and $1$ if the number is odd.  All sequences in a kneading cycle have the same length parity.

For integers $a > 0$ and $s=0$ or $1$, excepting the cases $(a,s)=(1,1)$ and $(2,1)$, we define sets
\begin{align*}
	S_{a,s} &= \{ \, \text{sequences of positive integers with alternant $a$ and length parity $s$} \, \}\\
	Z_{a,s} &= \{ \, \text{Zagier-reduced forms of discriminant $a^2 + (-1)^s \cdot 4$} \, \}.
\end{align*}

We define a map $\psi_{a,s} \colon Z_{a,s} \rightarrow S_{a,s}$ as follows.  If $f = Ax^2+ Bxy + Cy^2$ is in $Z_{a,s}$, we first compute $z = (a+B)/2$ (an integer) and expand the rational number $\tfrac{z}{A}$ into the unique continued fraction with sequence of quotients of length parity $s$.  We set $\psi_{a,s} (f)$ to be this sequence of quotients.  It will be shown to have alternant $a$ in Section \ref{S:proofs}

We also define a map $\phi_{a,s} \colon S_{a,s} \rightarrow Z_{a,s}$.  We define $\phi_{a,s} ((q_1, \ldots, q_l ))$ to be the form
\begin{equation}\label{E:correspondingform}
	 \left[ q_2, \ldots, q_l \right] x^2 + \left( \left[ q_1, \ldots, q_l \right] + \left[ q_2, \ldots, q_{l-1} \right] \right) xy  + \left[ q_1, \ldots, q_{l-1} \right] y^2.
\end{equation}
(When $l=1$, we should interpret this as $\phi_{a,s} ((q_1)) = x^2 + q_1 xy + y^2$.) In Section \ref{S:proofs}, the discriminant of the form \eqref{E:correspondingform} will be computed as $a^2 + (-1)^s \cdot 4$, where $a$ is the alternant of $(q_1, \ldots, q_l)$.

Our main theorem states:
\begin{theorem}\label{T:main}
The maps $\psi_{a,s}$ and $\phi_{a,s}$ are inverses, and through them Zagier reduction of forms corresponds to kneading.
\end{theorem}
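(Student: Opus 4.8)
The plan is to translate every claim into an identity about continuants, tracked through the matrix product
\[
  P(\OR{q}) = M(q_1)\cdots M(q_l)
  = \begin{pmatrix} [q_1,\ldots,q_l] & [q_1,\ldots,q_{l-1}] \\ [q_2,\ldots,q_l] & [q_2,\ldots,q_{l-1}] \end{pmatrix},
  \qquad M(q) = \begin{pmatrix} q & 1 \\ 1 & 0 \end{pmatrix}.
\]
First I would collect the tools: the two continuant recurrences $[q_1,\ldots,q_l]=q_1[q_2,\ldots,q_l]+[q_3,\ldots,q_l]=q_l[q_1,\ldots,q_{l-1}]+[q_1,\ldots,q_{l-2}]$, positivity of continuants of positive integers, and the determinant identity $[q_1,\ldots,q_l][q_2,\ldots,q_{l-1}]-[q_1,\ldots,q_{l-1}][q_2,\ldots,q_l]=\det P(\OR{q})=(-1)^l$. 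Reading $\phi(\OR{q})$ off $P(\OR{q})$ — the coefficients $A$ and $C$ are the two off-diagonal entries, $B$ is the trace, and the alternant $a$ is the difference of the diagonal entries — the elementary identity $(\text{trace})^2-4(\text{off-diagonal product})=(\text{diagonal difference})^2+4\det$ turns the discriminant $B^2-4AC$ into $a^2+4\det P(\OR{q})=a^2+(-1)^s\cdot 4$, since $(-1)^l=(-1)^s$. Reducedness of $\phi(\OR{q})$ is then immediate: $A,C>0$ by positivity, and $B>A+C$ comes from writing $B-(A+C)$ via the recurrences as a sum of manifestly positive continuant differences.

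For $\psi$ I would note that $z=(a+B)/2$ is an integer and that the discriminant relation rearranges to $z(z-a)-AC=(-1)^s$, so $z(z-a)\equiv(-1)^s\pmod A$ and hence $\gcd(z,A)=1$; thus $z/A>1$ is in lowest terms and has a unique expansion of each length parity. To see the alternant equals $a$, reduce the determinant identity modulo $A=[q_2,\ldots,q_l]$ to get $[q_2,\ldots,q_{l-1}]\equiv(-1)^s z^{-1}\pmod A$, and observe that $z-a$ satisfies the same congruence; the identity $D-(B-2A)^2=4A(B-A-C)$ noted earlier, together with reducedness, forces $(B-2A)^2<a^2$ and hence $0<z-a<A$ when $A\ge 2$, so $z-a$ and $[q_2,\ldots,q_{l-1}]$ are the common residue, giving alternant $z-(z-a)=a$ (the forms with $A=1$ are the short sequences, checked separately). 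The bijection then follows quickly: for $\psi\circ\phi$ the value computed from $\phi(\OR{q})$ is exactly $z=[q_1,\ldots,q_l]$ over $A=[q_2,\ldots,q_l]$, whose parity-$s$ expansion returns $\OR{q}$; and $\psi$ is injective because $z$ and $A$ recover $B=2z-a$ and then $C$ from the discriminant, so $\psi$ is a bijection with inverse $\phi$.

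The substance is the intertwining $\phi\circ(\text{knead})=(\text{reduce})\circ\phi$. I would first pin down the reducing number: since $\tfrac{B+a}{2A}=[q_1,\ldots,q_l]/[q_2,\ldots,q_l]$ is the value of the continued fraction with quotients $\OR{q}$, it lies in $(q_1,q_1+1)$, and bounding the perturbation $\tfrac{\sqrt D-a}{2A}$ keeps $\tfrac{B+\sqrt D}{2A}$ inside that interval, so $n=q_1+1$. Acting by the reduction matrix then produces the explicit reduced form $A'=An^2-Bn+C$, $B'=2An-B$, $C'=A$. On the sequence side I would package pinching as matrix multiplication: left-pinching multiplies the product matrix on the left by $L=\left(\begin{smallmatrix}1&0\\1&-1\end{smallmatrix}\right)$ and right-pinching on the right by $L'=\left(\begin{smallmatrix}1&1\\0&-1\end{smallmatrix}\right)$, one identity covering both the $x\ge 2$ and $x=1$ rules uniformly (the collapse $M(1)M(0)M(q)=M(q+1)$ reconciles the $x=1$ case). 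Kneading $\OR{q}$ is then the operation $P\mapsto L\,\bigl(M(q_1)^{-1}P\bigr)\,L'\,M(q_1)$ on product matrices, and a direct multiplication — using $n-q_1=1$ and the left-recurrences for $[q_1,\ldots,q_l]$ and $[q_1,\ldots,q_{l-1}]$ — shows this matrix has trace $B'$, off-diagonal entries $A',C'$, and diagonal difference $a$. Hence $\phi(\text{knead}\,\OR{q})$ is the reduced form, which with the bijection is the theorem.

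The main obstacle is boundary bookkeeping rather than any single hard idea. Pinning the reducing number to exactly $q_1+1$ needs the perturbation estimate handled carefully when $[q_1,\ldots,q_l]/[q_2,\ldots,q_l]$ sits near an endpoint of $(q_1,q_1+1)$, and one must separately confirm that $D=a^2+(-1)^s\cdot 4$ is never a perfect square so the strict inequalities defining $n$ are legitimate. The uniform pinch-as-multiplication identity fails for empty or length-one middle sequences (there $LL'\ne I$), so kneading of length-one and length-two sequences, together with the excluded pairs $(a,s)=(1,1),(2,1)$ and the forms with $A=1$, must be matched by hand against the stated conventions, and the parity-$s$ choice of expansion must be threaded consistently so that left- and right-pinching land on the correct $x=1$ rules. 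These checks are routine but numerous, and arranging them so that the generic matrix computation and the special cases agree is where the real effort goes.
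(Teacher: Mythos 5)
Your proposal is correct, and its overall skeleton matches the paper's: the same decomposition (show $\phi$ lands in reduced forms of discriminant $a^2+(-1)^s\cdot 4$, show the maps are mutually inverse via continued-fraction uniqueness plus a congruence-and-bounds argument, pin the reducing number at $q_1+1$, then verify the intertwining), built from the same toolkit of the recurrences \eqref{E:contrecursion}, the matrix identity \eqref{E:matrixcontinuant}, and the determinant identity \eqref{E:contdeterminant}. Where you genuinely diverge is the intertwining step, and your route is cleaner than the paper's. The paper writes the kneaded sequence explicitly as $(q_2-1,q_3,\ldots,q_{l-1},q_l-1,1,q_1)$ and verifies three separate continuant identities \eqref{E:longcoefficient}--\eqref{E:shortcoefficient} by repeated use of the recurrences; you instead encode both pinching rules uniformly as multiplication by the fixed matrices $L=\left(\begin{smallmatrix}1&0\\1&-1\end{smallmatrix}\right)$ and $L'=\left(\begin{smallmatrix}1&1\\0&-1\end{smallmatrix}\right)$, so kneading acts on product matrices by $P\mapsto L\,M(q_1)^{-1}P\,L'M(q_1)$, and the punchline that $L'M(q_1)=\left(\begin{smallmatrix}q_1+1&1\\-1&0\end{smallmatrix}\right)$ is exactly Zagier's reduction matrix makes a single $2\times 2$ multiplication deliver all three coefficient identities and the invariance of the alternant at once. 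This buys conceptual clarity---it explains \emph{why} the reduction matrix appears---at the cost of the boundary bookkeeping you honestly flag: the matrix encoding fails when the middle sequence has length $\leq 1$, so $l=1,2$ must be checked by hand exactly as in the paper, and the endpoint analysis for the reducing number (including the excluded case $(a,s)=(2,1)$) is the same in both treatments. Two smaller divergences are also fine: your trace-determinant computation of the discriminant is shorter than the paper's expansion, and your alternant argument runs the congruence modulo $A$ (identifying $[q_2,\ldots,q_{l-1}]$ with $z-a$, with $A=1$ set aside) where the paper runs it modulo $z$ (identifying $[q_1,\ldots,q_{l-1}]$ with $C$, using $z>A,C$); both rest on reducedness in the same way.
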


\begin{example}
Consider the form $f = 44x^2 + 114 xy +17 y^2$, which has discriminant $100^2 + 4$.  To compute the corresponding sequence (with $a=100$ and $s=0$), we compute $z = (114+100)/2 = 107$, then expand $\tfrac{107}{44}$ as a continued fraction with even length
\begin{equation*}
	\frac{107}{44} = 2 + \cfrac{1}{2 + \cfrac{1}{3 + \cfrac{1}{6}}}.
\end{equation*}
Thus, $\psi_{100,0}(f) = (2,2,3,6)$, the sequence from the kneading example in Section \ref{S:intro}.  We reduce $f$ by computing the integer $n$ for which $n-1 < \tfrac{114+\sqrt{10004}}{88} < n$, that is, $n=3$, and then act on $f$ by the matrix $\begin{pmatrix} 3 & 1 \\ -1 & 0 \end{pmatrix}$ to obtain the new form $f' = 71x^2+150xy+44y^2$. To find $\psi_{100,0} (f')$, we calculate $z=(150+100)/2 = 125$, then expand $\tfrac{125}{71}$ as a continued fraction to obtain the sequence $\psi(f') = (1,1,3,5,1,2)$, the result of kneading $(2,2,3,6)$.
\end{example}

Theorem \eqref{T:main} provides an efficient method for producing all sequences with given alternant and length parity from a known list of Zagier-reduced forms of a certain discriminant.  Alternatively, from a known list of sequences with given alternant, we can compute the entire list of corresponding Zagier-reduced forms.  For instance, it can be shown that the set of all sequences with even length and alternant 11 comprises the sequence $(2,1,1,2)$, which is fixed by kneading, and a single other kneading cycle:
\begin{align*}
	(1,11) &\mapsto (1,9,1,1) \mapsto (1,8,2,1) \mapsto (1,7,3,1) \mapsto
	(1,6,4,1) \mapsto (1,5,5,1) \\
	&\mapsto (1,4,6,1) \mapsto (1,3,7,1) \mapsto (1,2,8,1) \mapsto (1,1,9,1) \mapsto (11,1) \mapsto (1,11).
\end{align*}
From these and \eqref{E:correspondingform}, we obtain the entire list of Zagier-reduced forms of discriminant $125$.  Representing the form $Ax^2 + Bxy + Cy^2$ by $(A,B,C)$, they are the imprimitive form $(5,15,5)$ and those in the reduction cycle
\begin{align*}
	&(11,13,1) \mapsto (19,31,11) \mapsto (25,45,19) \mapsto (29,55,25) \mapsto
	(31,61,29) \mapsto (31,63,31)\\
	&\mapsto (29,61,31) \mapsto (25,55,29) \mapsto (19,45,25) \mapsto (11,31,19) \mapsto (1,13,11) \mapsto (11,13,1).
\end{align*}
Lemmermeyer \cite{fL2014} notes the middle coefficients of the forms in some cycles steadily increase until they reach a maximum and then steadily decrease until they return to the minimum.  This phenomenon, visible in the cycle of forms above, is illuminated by the clear pattern in the corresponding kneading cycle.  The kneading cycle corresponding to the form $x^2 + (a+2) xy + a y^2$ of discriminant $a^2+4$ always exhibits a similar pattern -- see Lemma \ref{L:principalcyclea}.

We note that kneading can be used to perform reduction on Zagier-reduced forms with arbitrary non-square discriminant $D > 0$.  To accomplish this, begin by solving the Pell equation $x^2 - D y^2 = 4$ for integers $x$ and $y$ with $y \neq 0$.  If $f$ is a Zagier-reduced form of discriminant $D$, then consider the form $yf$ obtained by multiplying all coefficients of $f$ by $y$.  A glance at the Zagier reduction algorithm reveals that $yf$ and $f$ both have the same reducing number. Hence, if $f'$ is the form obtained by reducing $f$, then $yf'$ is the form obtained by reducing $yf$.  Multiplication by $y$ thus gives a bijection from the reduction cycle of $f$ to that of $yf$.  On the other hand, the discriminant of $yf$ is $y^2 D = x^2-4$.  Thus, we may reduce $f$ by determining $yf$ and kneading the corresponding sequence.

\section{Proofs}\label{S:proofs}

To begin, we develop some properties of continued fractions and continuants (see, for instance, \cite{PS1992}).

Beginning with $[ \cdot ] = 1$ and $[ q_1 ] = q_1$, continuants satisfy the recurrences
\begin{equation}
\begin{aligned}\label{E:contrecursion}
	\left[ q_1, \ldots, q_l \right] &= q_1 \left[ q_2, \ldots, q_l \right] + \left[ q_3, \ldots, q_l \right] \, \text{ or}\\
	\left[ q_1, \ldots, q_l \right] &= q_l \left[ q_1, \ldots q_{l-1} \right] + \left[ q_1, \ldots, q_{l-2} \right].
\end{aligned}
\end{equation}
We adopt, for now, the first as our definition and later show that it gives the numerator of an appropriate continued fraction.  The equivalence with the second recurrence and all other properties we will need follow elegantly from the matrix identity
\begin{equation}\label{E:matrixcontinuant}
	\begin{bmatrix} q_1 & 1 \\ 1 & 0 \end{bmatrix} \begin{bmatrix} q_2 & 1 \\ 1 & 0 \end{bmatrix} \cdots \begin{bmatrix} q_l & 1 \\ 1 & 0 \end{bmatrix} = \begin{bmatrix} [q_1, \ldots, q_l] & [q_1, \ldots, q_{l-1} ] \\ [q_2, \ldots, q_l] & [q_2, \ldots q_{l-1} ] \end{bmatrix},
\end{equation}
which can be verified by induction using the first recursion \eqref{E:contrecursion}. 

Transposing both sides of \eqref{E:matrixcontinuant} reveals the surprising symmetry $\left[ q_1, \ldots, q_l \right] = \left[ q_l, \ldots, q_1 \right]$, from which follows the second recursion \eqref{E:contrecursion}.  Taking determinants in \eqref{E:matrixcontinuant} yields another useful identity
\begin{equation}\label{E:contdeterminant}
	\left[ q_1, \ldots, q_l \right] \left[ q_2, \ldots, q_{l-1} \right] - \left[ q_1, \ldots, q_{l-1} \right]. \left[ q_2, \ldots, q_l \right] = (-1)^{l}.
\end{equation}

We also note the simplifications
\begin{align}
	\left[ q_1, q_2, \ldots, q_i, 0, q_{i+1}, \ldots, q_l \right] &= \left[ q_1, q_2, \ldots, q_{i-1}, q_i + q_{i+1}, q_{i+2},  \ldots, q_l \right] \label{E:zerosimplify} \\
	\left[ 0, q_1, \ldots, q_l \right] &= \left[ q_2, \ldots, q_l \right] \label{E:endzerosimplify}\\
	\left[ 1, q_1, \ldots, q_l \right] &= \left[ q_1+1, \ldots, q_l \right]. \label{E:onesimplify}
\end{align}
The first follows from \eqref{E:matrixcontinuant} and the computation
\begin{equation*}
	\begin{bmatrix} q_i & 1 \\ 1 & 0 \end{bmatrix} \begin{bmatrix} 0 & 1 \\ 1 & 0 \end{bmatrix} \begin{bmatrix} q_{i+1} & 1 \\ 1 & 0 \end{bmatrix} = \begin{bmatrix} q_i + q_{i+1} & 1 \\ 1 & 0 \end{bmatrix},
\end{equation*}
and the others follow readily from the recursion \eqref{E:contrecursion}.

Now let us return to continued fractions.  We can prove inductively that
\begin{equation}\label{E:cfracnumerator}
	q_1 + \cfrac{1}{q_2 + \cfrac{1}{\ddots \, + \cfrac{1}{q_l}}} = \frac{\left[ q_1, \ldots, q_l \right]}{\left[ q_2, \ldots, q_{l} \right]}.
\end{equation}
We see from \eqref{E:contdeterminant} that this fraction is in lowest terms, so the continuant $\left[ q_1, \ldots, q_l \right]$ is the numerator when the continued fraction with partial quotients $q_1$, \ldots, $q_l$ is fully simplified. 

 Now we prove that for given integers $a > 0$ and $s = 0$ or $1$ with $(a,s) \neq (1,1)$ or $(2,1)$, in order:
 \renewcommand{\theenumi}{\roman{enumi}}
\begin{enumerate}
	\item If $(q_1, \ldots, q_l)$ has alternant $a$, then $\phi_{a,s} ((q_1, \ldots, q_l))$ is Zagier-reduced with discriminant $a^2 + (-1)^l \cdot 4$,
	\item $\psi_{a,s} \circ \phi_{a,s}$ is the identity map on $S_{a,s}$,
	\item If $f$ is a form of discriminant $a^2 + (-1)^s \cdot 4$, then $\psi_{a,s} (f)$ has alternant $a$ and length parity $s$,
	\item $\phi_{a,s} \circ \psi_{a,s}$ is the identity map on $Z_{a,s}$,
	\item Kneading corresponds to Zagier reduction of forms.
\end{enumerate}

\noindent \textbf{(i):} This is easily verified when $l=1$ or $2$, so let $l \geq 3$.  Let $(q_1, \ldots, q_l)$ be a sequence of positive integers with alternant $a$, and let $\phi_{a,s} ((q_1, \ldots, q_l)) = Ax^2 + Bxy + Cy^2$ be the form \eqref{E:correspondingform}.  To see that it is reduced, note that coefficients $A$ and $C$ are clearly positive, so we need only check that $B > A + C$.  Using \eqref{E:contrecursion}, we compute
\begin{align*}
	B - C &= \left( q_l -1 \right) \left[ q_1, \ldots, q_{l-1} \right] + \left[ q_1, \ldots, q_{l-2} \right] + \left[ q_2, \ldots, q_{l-1} \right]\\
	&>  (q_l - 1) \left[ q_2, \ldots, q_{l-1} \right] + \left[ q_2, \ldots, q_{l-2} \right] + \left[ q_2, \ldots, q_{l-1} \right]\\
	&= q_{l} \left[ q_2, \ldots, q_{l-1} \right] + \left[ q_2, \ldots, q_{l-2} \right]  = \left[ q_2, \ldots, q_{l} \right] = A.
\end{align*}
For the discriminant we compute, using \eqref{E:contdeterminant} and \eqref{E:contrecursion}, 
\begin{align*}
	&\left( \left[ q_1, \ldots, q_{l} \right] + \left[ q_2, \ldots, q_{l-1} \right] \right)^2 - 4 \left[ q_2, \ldots, q_{l} \right] \left[ q_1, \ldots, q_{l-1} \right] \\
	&= \left[ q_1, \ldots, q_{l} \right]^2 + \left[ q_2, \ldots, q_{l-1} \right]^2 - 2 \left[ q_2, \ldots, q_{l} \right] \left[ q_1, \ldots, q_{l-1} \right] + (-1)^{l} \cdot 2 \\
	&= \left( q_1 \left[ q_2, \ldots, q_{l} \right] \right)^2 + 2 q_1 \left[ q_2, \ldots, q_{l} \right] \left[ q_3, \ldots, q_{l} \right] + \left[ q_3, \ldots, q_{l} \right]^2 + \left[ q_2, \ldots, q_{l-1} \right]^2 \\
	&\qquad - 2q_1 \left[ q_2, \ldots, q_{l} \right] \left[ q_2, \ldots, q_{l-1} \right] - 2 \left[ q_2, \ldots, q_{l} \right] \left[ q_3, \ldots, q_{l-1} \right]+ (-1)^{l} \cdot 2\\
	&= \left( q_1 \left[ q_2, \ldots, q_{l} \right] - \left[ q_2, \ldots, q_{l-1} \right] + \left[ q_3, \ldots, q_{l} \right] \right)^2 + (-1)^{l } \cdot 2 \\
	&\qquad - \left( 2 \left[ q_2, \ldots, q_{l} \right]\left[ q_3, \ldots, q_{l-1} \right] - 2 \left[ q_2, \ldots, q_{l-1} \right] \left[ q_3, \ldots, q_{l} \right] \right)\\
	&= a^2 + (-1)^{l} \cdot 4. \qedhere
\end{align*}

\noindent \textbf{(ii):}  The definition of alternants and \eqref{E:correspondingform}  show that the sequence\\ $\psi_{a,s} \left( \phi_{a,s} ((q_1, \ldots, q_l)) \right)$ is obtained by expanding in a continued fraction the rational number with denominator $\left[ q_2, \ldots, q_l \right]$ and numerator $\left[ q_1, \ldots, q_l \right]$.  From \eqref{E:cfracnumerator} and the well-known uniqueness of continued fraction expansions, this sequence is $( q_1, \ldots, q_l )$ (which has the right length parity).   \\

\noindent \textbf{(iii):} If $a=1$ or $2$, so by our assumption $s=0$, then it is not hard to show that the only Zagier-reduced forms of discriminant $a^2 + (-1)^s \cdot 4$ are $x^2 + 3xy + y^2$, $x^2 + 4xy + 2y^2$, and $2x^2 + 4xy + y^2$.   Applying $\psi_{1,0}$ to the first and $\psi_{2,0}$ to the other two gives the sequences $(1,1)$, $(2,1)$, and $(1,2)$ of alternants $1$, $2$, and $2$ respectively.  Thus (iii) holds in these cases.

Now choose a Zagier-reduced form $f = A x^2 + Bxy + Cy^2$ of discriminant $D = a^2 + (-1)^s \cdot 4$ with $a > 2$, $s=0$ or $1$, and $D > 0$.  By design, the length parity of $\psi_{a,s} (f)$ is $s$, so we need only worry about the alternant.

First, $B$ and $D$ have the same parity, hence $a$ and $B$ do.  The positive integer $z = (a+B)/2$ is thus a divisor of  
\begin{equation*}
\frac{B^2 - a^2}{4} =  AC + \frac{D}{4} -\frac{a^2}{4} = AC + (-1)^{s}.
\end{equation*}
Thus, $A$ is relatively prime to $z$ and $AC \equiv (-1)^{s+1} \pmod{z}$.

Note as well that $a^2 + (-1)^s \cdot 4 = B^2 - 4AC > (A-C)^2$ since $f$ is reduced. Then $a > |A-C|$ since $a > 2$, so $a + A > C$. Hence, using again that $f$ is reduced, we have $z > (a+A+C)/2 > C$.  Since also $a+ C > A$, we also have $z > A$.

 Expand $z/A$ as a simple continued fraction with sequence of quotients $(q_1, \ldots q_l)$, and choose the length so that $l$ and $s$ have the same parity.  From \eqref{E:cfracnumerator}, we have $z = \left[ q_1, \ldots, q_l \right]$ and $A = \left[ q_2, \ldots, q_{l} \right]$.  Since $AC \equiv (-1)^{s+1} \pmod{z}$, we also have from \eqref{E:contdeterminant} the congruence $C \equiv \left[ q_1, \ldots, q_{l-1} \right] \pmod{z}$.  Since $0 < C < z$, it follows that $C = \left[ q_1, \ldots, q_{l-1} \right]$.

From \eqref{E:contdeterminant}, we have
\begin{align*}
	\left[ q_2, \ldots, q_{l-1} \right] = 2 \frac{AC + (-1)^l}{a+B} &= \frac{B^2 - (a^2+(-1)^l \cdot 4) + (-1)^l \cdot 4}{2(a+B)}\\
	&= \frac{B-a}{2}.
\end{align*}
 
 Thus, the alternant of $\psi_{a,s} (f)$ is 
 \begin{equation*}
 	\left[ q_1, \ldots, q_l \right] - \left[ q_2, \ldots, q_{l-1} \right] = \frac{B+a}{2} - \frac{B-a}{2} = a.
 \end{equation*}
 
 \noindent \textbf{(iv):} Let $f=A x^2 + Bxy + C y^2$ be as in (iii).  The verification of (iii) shows at least that the form $\phi_{a,s} \circ \psi_{a,s} (f)$ is $A x^2 + B' xy + Cy^2$ for some positive integer $B'$.  Also, (i) and (iii) show that  $B'$ satisfies $B'^2 - 4AC = a^2 + (-1)^s \cdot 4$.  But $B$ is the unique such positive integer, thus $\phi_{a,s} \circ \psi_{a,s} (f) = f$.  \\
 
\noindent \textbf{(v):}  Suppose that $(q_1, \ldots, q_l)$ is a sequence with alternant $a$ and length parity $s$.  The reducing number for Zagier reduction of $\phi_{a,s} ((q_1, \ldots, q_l))$ is
\begin{equation}\label{E:reducingnumber}
	\left\lceil \frac{\left[ q_1, \ldots, q_{l} \right] + \left[ q_2, \ldots, q_{l-1} \right] + \sqrt{D}}{2 \left[ q_2, \ldots, q_{l} \right]} \right\rceil,
\end{equation}
 where $D = a^2 + (-1)^{s} \cdot 4$ is the discriminant.  When $l=1$, so $q_1 > 2$, the number inside the ceiling is $\tfrac{q_1 + \sqrt{q_1^2 - 4}}{2}$, making the value of the ceiling $q_1$.  When $l=2$, the reducing number is 
 \begin{equation*}
	\left\lceil \frac{q_1 q_2+2+\sqrt{(q_1 q_2)^2+4}}{2q_2} \right\rceil .
\end{equation*}
A little algebra shows that this ceiling is $q_1+1$ when $q_2 \geq 2$ and $q_1+2$ when $q_2 = 1$.  A direct check shows that in these cases, reducing the form using the appropriate matrix corresponds to kneading the corresponding sequence. \\

Otherwise, for $l \geq 3$ the term $\sqrt{D}$ in the numerator of \eqref{E:reducingnumber} is approximately $a$, so the whole numerator is approximately 
 \begin{equation*}
 	2 \left[ q_1, \ldots, q_{l} \right] = 2q_1 \left[ q_2, \ldots, q_{l} \right] + 2 \left[ q_3, \ldots, q_{l} \right],
\end{equation*}
making, at least approximately, the expression in the ceiling in \eqref{E:reducingnumber} between $q_1$ and $q_1+1$.  Some algebra shows in fact that the exact quotient is between $q_1$ and $q_1+1$, so the reducing number is always $q_1+1$ in this case (the quotient is equal to $1$ when $l=3$ and $q_1=q_3=1$, but this falls under the case $(a,s)=(2,1)$ that we are excluding). Acting on $\phi_{a,s} ((q_1, \ldots, q_l))$ by the reduction matrix $\begin{pmatrix} q_1 + 1 & 1 \\ -1 & 0 \end{pmatrix}$, the theorem follows by checking the formulas (\eqref{E:zerosimplify} and \eqref{E:endzerosimplify} show these are appropriate even when $q_2$ or $q_l$ is $1$):
\begin{align}
	&\left[ q_2 - 1, q_3, \ldots, q_{l-1}, q_{l}-1, 1, q_1 \right] \label{E:longcoefficient} \\
	&= (q_1+1)^2 \left[ q_2, \ldots, q_{l} \right] - (q_1+1) \left( \left[ q_1, \ldots, q_{l} \right] + \left[ q_2, \ldots, q_{l-1} \right] \right) + \left[ q_1, \ldots, q_{l-1} \right]\notag\\
	&\left[ 1, q_2 - 1, q_3, \ldots, q_{l-1}, q_{l} - 1, 1, q_1 \right] + \left[ q_2 - 1, q_3, \ldots, q_{l-1}, q_{l} - 1, 1 \right] \label{E:middlecoefficient}\\
	&= (2q_1+2) \left[ q_2, \ldots, q_{l} \right] - \left( \left[ q_1, \ldots, q_{l} \right] + \left[ q_2, \ldots, q_{l-1} \right] \right)\notag\\
	&\left[ 1, q_2 - 1, q_3, \ldots, q_{l-1}, q_{l} - 1, 1 \right] = \left[ q_2, \ldots, q_{l} \right]. \label{E:shortcoefficient}
\end{align}
First separating off a $q_1$ from the second and fourth continuants on the right side of \eqref{E:longcoefficient}and then repeatedly applying \eqref{E:contrecursion} simplifies it to
\begin{align*}
	&(q_1+1) \left( \left[ q_2, \ldots, q_{l} \right] - \left[ q_3, \ldots, q_{l} \right] \right) - \left[ q_2, \ldots, q_{l-1} \right] + \left[ q_3, \ldots, q_{l-1} \right]\\
	&= (q_2 - 1)(q_1+1) \left[ q_3, \ldots, q_{l} \right] + (q_1+1) \left[ q_4, \ldots, q_{l} \right]\\
	&\qquad - (q_2 - 1) \left[ q_3, \ldots, q_{l-1} \right] - \left[ q_4, \ldots, q_{l-1} \right]\\
	&= (q_1+1) \left[ q_2 - 1, q_3, \ldots, q_{l} \right] - \left[ q_2 - 1, q_3, \ldots, q_{l-1} \right]\\
	&= (q_1+1) \left[ q_2 - 1, q_3, \ldots, q_{l-1}, q_{l} - 1 \right] +q_1 \left[ q_2 - 1, q_3, \ldots, q_{l-1}  \right]\\
	&= q_1 \left[ q_2 - 1, q_3, \ldots, q_{l-1}, q_{l} - 1, 1 \right] + \left[ q_2 - 1, q_3, \ldots, q_{l-1}, q_{l} - 1 \right]\\
	&= \left[ q_2 - 1, q_3, \ldots, q_{l-1}, q_{l} - 1, 1, q_1 \right].
\end{align*}
With this, \eqref{E:longcoefficient} is verified.  The verification of \eqref{E:middlecoefficient} is similar, but shorter, after first simplifying the left side to
\begin{equation*}
	 q_1 \left[ q_2, \ldots, q_{l} \right] + \left[ q_2, \ldots, q_{l-1}, q_{l} - 1 \right] + \left[ q_2 - 1, q_3, \ldots, q_{l} \right].
\end{equation*}
Equation \eqref{E:shortcoefficient} follows immediately from \eqref{E:onesimplify}.

\section{The sum invariant}

The standard class invariants of an indefinite form are the discriminant and the greatest common divisor of the coefficients. Theorem \ref{T:main} identifies a new invariant for forms with discriminant of the form $a^2 + (-1)^s \cdot 4$: the sum of the corresponding sequences.  We will call this number the ``sum of a form''. Because it is a class invariant, we also write the ``sum of a class'' to mean the sum of any representative of the class.       

The classes of primitive forms of common discriminant have the structure of a finite abelian group with the group operation called \emph{Gauss composition}.  We conjecture in this section a link between a reduced form's sum, the length of its cycle, and in Conjecture \ref{C:composition}, the group structure.    The number of Gauss-reduced forms in a cycle was studied by Lachaud \cite{gL1984} \cite{gL1987} and is called the \emph{caliber} of the cycle.  In what follows, we shall use ``caliber'' to refer to the length of a cycle of Zagier-reduced forms.

The conjectures we present were formulated after examining data.   It is certainly much easier to compute data about the sum invariant from the point of view of kneading cycles.  Kneading is simple to implement with standard list manipulation algorithms.  The sequences with sum $n$ are just the \emph{compositions} (i.e., ordered partitions) of $n$.  The author used \emph{Mathematica} \cite{Wolfram} to enumerate these compositions and place them into kneading cycles. Conjecture \ref{C:composition} below was discovered with the aid of Matthews' tools at \texttt{www.numbertheory.org}.

Forms with the same sum invariant are distributed across different discriminants.  One naturally wonders to what extent the cycle of a form is determined by its discriminant, the greatest common divisor of its coefficients, and the sum invariant.  Some cycles are not uniquely determined by these values.  For instance, classes that are inverses of each other under Gauss composition share all invariants.  Conjecture \ref{C:composition} below implies that other pairs of classes share all invariants.  Outside of these cases, it is hard to find counterexamples with small sum.  But, for instance, the primitive forms $5x^2+30xy+11y^2$ and $7x^2+34xy+17y^2$ each have discriminant 680 and sum 9 but are not equivalent.  They also are not in inverse classes and do not meet the hypotheses of Conjecture \ref{C:composition}.

\subsection{Forms with discriminant of the form $a^2+4$}

The identity class for the Gauss composition operation is called the \emph{principal class}. The cycle of Zagier-reduced forms in this class is the \emph{principal cycle}. When the discriminant has the form $a^2 \pm 4$, we call also call the corresponding kneading cycle the \emph{principal kneading cycle} of alternant $a$ and length parity $0$. 

The principal class of discriminant $a^2+4$ can be shown to be the class of the form $x^2 + (a+2)xy + ay^2$.

\begin{lemma}\label{L:principalcyclea}
For $a \geq 3$, the principal kneading cycle of alternant $a$ and length parity $0$ has the pattern
\begin{equation*}
	(a, 1) \mapsto (1, a) \mapsto (1, a-2, 1, 1) \cdots  \mapsto (1, a-k, k-1, 1) \mapsto \cdots \mapsto (1, 1, a-2, 1) \mapsto (a,1).
\end{equation*}
In particular, the associated sum invariant is $a+1$.
\end{lemma}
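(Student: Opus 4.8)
The plan is to prove directly that the displayed family of sequences forms a single kneading cycle by verifying that kneading carries each listed sequence to the next, and that this cycle has alternant $a$ and length parity $0$. Since the sum invariant is preserved under kneading and equals $a+1$ for every sequence in the list (for instance $(a,1)$ has sum $a+1$, and $(1,a-k,k-1,1)$ has sum $1+(a-k)+(k-1)+1 = a+1$), the final sentence will follow immediately once the cycle is established. I will also need to confirm that this cycle is indeed the principal one, i.e. that it contains the sequence $\phi_{a,0}^{-1}$ of the principal form $x^2+(a+2)xy+ay^2$.

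First I would pin down the endpoints and the general transition. The two short sequences $(a,1)$ and $(1,a)$ are length $2$, so I apply the explicit length-$2$ rule from Section \ref{S:intro}: kneading $(a,1)$ gives $(1,a)$ since the second entry is $1$, and kneading $(1,a)$ gives $(1,a-2,1,1)$ since $a \geq 3$. This launches the length-$4$ portion of the cycle, which I parametrize as $(1,a-k,k-1,1)$ for $k$ running from $1$ up to $a-1$; at $k=1$ this reads $(1,a-1,0,1)$, so I would note that the correct first length-$4$ term is $(1,a-2,1,1)$, matching $k=2$, and adjust the indexing accordingly so the general term is genuinely a sequence of positive integers. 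The heart of the argument is then the single computation that kneading $(1,a-k,k-1,1)$ produces $(1,a-k-1,k,1)$: remove the leftmost $1$, leaving $(a-k,k-1,1)$; pinch both ends; then append a $1$ on the right. I would carry out the two pinches explicitly using the pinching rule, tracking how the left pinch on $a-k \geq 2$ inserts a leading $1$ and decrements, while the right pinch acts on the trailing $1$, and check that reassembling gives the claimed successor.

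To identify the cycle as principal, I would compute $\phi_{a,0}((a,1))$ using the interpretation in \eqref{E:correspondingform}, or more cleanly apply $\psi_{a,0}$ to the principal form $x^2+(a+2)xy+ay^2$: here $A=1$, $B=a+2$, so $z=(a+(a+2))/2 = a+1$, and expanding $z/A = (a+1)/1$ as a continued fraction of even length gives $(a,1)$ (since $a+1 = a + \tfrac{1}{1}$). Thus $(a,1)$ is the sequence corresponding to the principal form, confirming the cycle is principal. Finally I would verify the cycle closes: after reaching $(1,1,a-2,1)$ (the $k=a-1$ term), one more kneading returns to $(a,1)$, which I check by the same pinch computation together with the length-$2$ collapse rule.

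The main obstacle will be handling the boundary cases of the pinch computation cleanly rather than the generic step. Specifically, when $k$ is near its extremes the entries $a-k$ or $k-1$ can equal $1$, so the pinch switches from the ``$x \geq 2$'' branch to the ``$x=1$'' branch, and I must check the transition formula still yields the stated successor (this is exactly the phenomenon that \eqref{E:zerosimplify} and \eqref{E:endzerosimplify} are designed to smooth over at the level of continuants, but at the level of raw sequences it requires care). I expect to dispatch these with a short case analysis at $k=2$, generic $k$, and $k=a-1$, after which monotonicity of the pattern and invariance of the sum give the result with no further computation.
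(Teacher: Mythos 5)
Your proposal is correct and matches the paper's approach: the paper's (very terse) proof likewise anchors the cycle by noting that $(a,1)$ corresponds to the principal form $x^2+(a+2)xy+ay^2$ and then asserts the kneading pattern is "easily checked." Your write-up simply fills in the details the paper omits — the $\psi_{a,0}$ computation identifying $(a,1)$, the generic pinch step, and the boundary cases at $k=2$ and $k=a-1$ — all of which check out.
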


\begin{proof}
The principal kneading cycle contains the sequence $(a,1)$ corresponding to the form $x^2 + (a+2) xy + ay^2$.  The pattern is then easily checked.
\end{proof}

\begin{lemma}
If $n$ is the sum of a class with discriminant of the form $a^2 + 4$, then $n \leq a+1$, with equality if and only if the class is principal.
\end{lemma}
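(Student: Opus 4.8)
The plan is to reduce the statement to a monotonicity property of the alternant. Since kneading preserves both the sum and the alternant, and since by Theorem \ref{T:main} the kneading cycles are exactly the classes, it suffices to prove that for every sequence $(q_1, \ldots, q_l)$ of positive integers of even length $l$, with alternant $a$ and sum $S = q_1 + \cdots + q_l$, we have $S \le a+1$, with equality precisely for the sequences listed in Lemma \ref{L:principalcyclea}. Throughout I would work with the quantity $g(q_1, \ldots, q_l) = a - S + 1$ and aim to show $g \ge 0$ with the stated equality.

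The heart of the argument is the claim that increasing any single entry raises the alternant by at least one; that is, $\Delta_j a \ge 1$, where $\Delta_j$ denotes the change upon replacing $q_j$ by $q_j + 1$. Each continuant is affine in $q_j$ with $q_j$-coefficient $[q_1, \ldots, q_{j-1}]\,[q_{j+1}, \ldots, q_l]$ (read off from \eqref{E:matrixcontinuant}), so for an interior index $\Delta_j a = [q_1, \ldots, q_{j-1}]\,[q_{j+1}, \ldots, q_l] - [q_2, \ldots, q_{j-1}]\,[q_{j+1}, \ldots, q_{l-1}]$, while $\Delta_1 a = [q_2, \ldots, q_l]$ and $\Delta_l a = [q_1, \ldots, q_{l-1}]$ (with the convention that an empty continuant is $1$). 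Replacing $[q_1, \ldots, q_{j-1}]$ by the smaller $[q_2, \ldots, q_{j-1}]$ and using the recurrence \eqref{E:contrecursion} to write $[q_{j+1}, \ldots, q_l] - [q_{j+1}, \ldots, q_{l-1}] = (q_l - 1)[q_{j+1}, \ldots, q_{l-1}] + [q_{j+1}, \ldots, q_{l-2}] \ge 1$, I obtain $\Delta_j a \ge [q_2, \ldots, q_{j-1}]\cdot 1 \ge 1$ for interior $j$, and the boundary cases are immediate. Consequently $\Delta_j g = \Delta_j a - 1 \ge 0$, so $g$ is nondecreasing in each coordinate.

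Decreasing every entry to $1$ then gives $g(q_1, \ldots, q_l) \ge g(1, \ldots, 1) = F_l - l + 1$, where $F_l$ is the Fibonacci number (with $F_1 = F_2 = 1$), since a continuant of $m$ ones equals $F_{m+1}$ and hence the all-ones alternant is $F_{l+1} - F_{l-1} = F_l$. As $F_l \ge l-1$ for every $l$, this proves $g \ge 0$, i.e. $S \le a+1$. For the equality clause, $g(1, \ldots, 1) = F_l - l + 1$ is strictly positive once $l \ge 6$, so monotonicity rules out equality there; the remaining cases are $l = 2$ and $l = 4$. When $l=2$ the explicit factorization $g = (q_1-1)(q_2-1)$ shows equality holds exactly for $(a,1)$ and $(1,a)$. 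When $l = 4$ the boundary increments $\Delta_1 a = [q_2,q_3,q_4]$ and $\Delta_4 a = [q_1,q_2,q_3]$ are at least $2$, so $\Delta_1 g, \Delta_4 g > 0$ and equality forces $q_1 = q_4 = 1$; a short direct computation then shows $g$ vanishes identically on $(1, q_2, q_3, 1)$, giving exactly the sequences $(1, m, a-1-m, 1)$. Together these are precisely the principal kneading cycle of Lemma \ref{L:principalcyclea}, so equality holds if and only if the class is principal.

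The main obstacle I anticipate is the interior increment bound $\Delta_j a \ge 1$: the boundary cases, the Fibonacci evaluation at the all-ones sequence, and the equality bookkeeping are routine, but producing a clean lower bound on the difference of two products of continuants requires care, and it is essential that the bound be at least $1$ (not merely nonnegative), so that it exactly absorbs the unit increase in $S$ and makes $a - S$ genuinely nondecreasing.
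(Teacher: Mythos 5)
Your route is genuinely different from the paper's: the paper expands $a$ through the identity \eqref{E:alternantbound} and bounds continuants below by the sum of their entries, splitting into cases according to whether $q_1=q_l=1$, whereas you make $a - S$ coordinatewise nondecreasing and reduce everything to the all-ones sequence and a Fibonacci evaluation. Your affine-coefficient formulas for $\Delta_j a$ (read off from \eqref{E:matrixcontinuant}), the boundary increments, the value $g(1,\ldots,1)=F_l-l+1$, and the equality bookkeeping for $l=2$ and $l=4$ all check out. But there is one genuine gap, and it sits exactly where the argument is most delicate.

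Your proof of the central bound $\Delta_j a \ge 1$ fails for $j = l-1$ when $q_l = 1$. For that index the difference you need is $[q_l] - [\cdot] = q_l - 1$, while the formula you invoke produces an extra term $[q_{j+1},\ldots,q_{l-2}]$ over an index range of length $-1$; for the recurrence \eqref{E:contrecursion} to remain valid that symbol must be taken to be $0$, not $1$ (your stated convention covers only ranges of length $0$). So when $q_l=1$ your chain yields only $\Delta_{l-1}a \ge [q_2,\ldots,q_{l-2}]\,(q_l-1) = 0$, not $\ge 1$. This case cannot be dismissed: any monotone walk from $(1,\ldots,1)$ up to a sequence whose last entry is $1$ must increment $q_{l-1}$ while $q_l=1$, and the equality sequences $(1,q_2,q_3,1)$ are precisely of this type -- on them $\Delta_3 a$ equals $1$ exactly, so the bound is tight there and needs a correct proof. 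Fortunately the claim is true and the repair is short: write
\begin{align*}
\Delta_j a &= \bigl([q_1,\ldots,q_{j-1}] - [q_2,\ldots,q_{j-1}]\bigr)\,[q_{j+1},\ldots,q_l]\\
&\qquad + [q_2,\ldots,q_{j-1}]\,\bigl([q_{j+1},\ldots,q_l] - [q_{j+1},\ldots,q_{l-1}]\bigr).
\end{align*}
Both summands are nonnegative by \eqref{E:contrecursion}; the second is $\ge 1$ whenever $j \le l-2$, and for $j=l-1$ (so $j\ge 3$, since $l\ge 4$) the first is at least $[q_3,\ldots,q_{j-1}]\,[q_l] \ge 1$. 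Alternatively, since the alternant is invariant under reversal of the sequence (transpose \eqref{E:matrixcontinuant}), the case $j=l-1$ reduces to the case $j=2$, which your chain does handle. With this patch your argument is complete and is a clean alternative to the paper's proof: it buys a conceptual explanation (the quantity $a-S$ can only grow as entries grow) in place of the paper's case analysis, at the cost of needing the discrete-derivative machinery.
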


\begin{proof}
Suppose $\OR{q} = (q_1, \ldots, q_l)$ is the sequence corresponding to a form in the class, so $l$ is even.  If $l = 2$, then $n=q_1 + q_2 \leq q_1 q_2 + 1 = a+1$, with equality only for the sequences $(a,1)$ and $(1,a)$. Otherwise, using \eqref{E:contrecursion}, we have
\begin{equation}
\begin{aligned}\label{E:alternantbound}
	a &= \left[ q_1, \ldots, q_l \right] - \left[ q_2, \ldots, q_{l-1} \right] \\
	 &= q_1 \left[ q_2, \ldots,  q_l \right] + \left[ q_3, \ldots, q_l \right] - \left[ q_2, \ldots, q_{l-1} \right] \\
	 &= (q_1 q_l-1) \left[ q_2, \ldots, q_{l-1} \right] + q_1 \left[ q_2, \ldots, q_{l-2} \right] + q_l \left[ q_3, \ldots, q_{l-1} \right] + \left[ q_3, \ldots, q_{l-2} \right].
\end{aligned}
\end{equation}

It can be checked by induction that for any sequence of positive integers $(c_1, \ldots, c_r)$, we have $\left[ c_1, \ldots, c_r \right] \geq c_1 + \cdots + c_r$.  
Thus, unless $q_1=q_l=1$, we obtain from \eqref{E:alternantbound} the inequality $a > q_1 + \cdots + q_l = n$.  If $q_1 = q_l = 1$ and $l \geq 6$, then \eqref{E:alternantbound} gives
\begin{equation*}
	a = \left[ q_2, \ldots, q_{l-2} \right] + \left[ q_3, \ldots, q_{l-1} \right] + \left[ q_3, \ldots, q_{l-2} \right] \geq q_2 + 3 (q_3 + \cdots + q_{l-2}) + q_{l-1} > n.
\end{equation*}
Otherwise, if $q_1 = q_l = 1$ and $l = 4$, then
\begin{equation*}
	a = \left[ 1, q_2, q_3, 1 \right] - \left[ q_2, q_3 \right] =  q_2 + q_3 + 1 = n-1.
\end{equation*}
The lemma now follows from Lemma \ref{L:principalcyclea}.
\end{proof}

 We note that these lemmas provide a fast test to decide if a form with discriminant $a^2+4$ is in the principal class:  use the Euclidean algorithm to compute the corresponding sequence.  If the algorithm takes more than three steps, the form is not principal.  Otherwise, it is principal only if the sum of the quotients that appeared is $a+1$.

Computations quickly reveal a striking regularity in the cycles of forms with discriminant of the form $a^2+4$ and fixed sum $n$. The evidence seems strong enough to formulate these regularities as conjectures.
\begin{conjecture}\label{C:divisor}
Consider a cycle of reduced forms of caliber $l$ with discriminant of the form $a^2+4$ and sum $n$.  Then
\begin{enumerate}
	\item $n = (2r+1)l + 1$ for some integer $r \geq 0$,
	\item if $r > 0$, then the cycle consists of imprimitive forms (i.e., the gcd of the coefficients of each form is $> 1$).
\end{enumerate}
\end{conjecture}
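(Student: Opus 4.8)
The plan is to translate both the caliber and the sum invariant into data attached to the fundamental unit of the relevant quadratic order, and then to read off the divisibility and parity.

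First I would reinterpret the reduction cycle in terms of continued fractions. By the computation of reducing numbers in part (v) of the proof of Theorem~\ref{T:main}, the reducing numbers $b_1, \ldots, b_l$ encountered as one Zagier-reduces around the cycle (one per form, so $l$ of them, where $l$ is the caliber) are exactly the partial quotients of the purely periodic \emph{negative} continued fraction of the quadratic irrational $w = \tfrac{B+\sqrt{D}}{2A}$ attached to any form $(A,B,C)$ in the cycle, with $D = a^2+4$; this is immediate since the reducing number is $\lceil w \rceil$ at each step. Thus $l$ is the negative-continued-fraction period length, and the target statement becomes $n-1 = (2r+1)\,l$.

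The first key lemma I would isolate is $\sum_{i=1}^{l}(b_i - 2) = l$, equivalently $\sum b_i = 3l$, which I have checked on the principal cycles of Lemma~\ref{L:principalcyclea} and on several imprimitive cycles. To prove it I would convert between the negative and the regular continued fraction of $w$: over an even-length regular period $(c_1, \ldots, c_{2k})$ the standard conversion gives $l = c_1 + c_3 + \cdots + c_{2k-1}$ and $\sum_i (b_i - 2) = c_2 + c_4 + \cdots + c_{2k}$. These two half-sums coincide precisely when the minimal regular period has odd length and the even period is its doubling, since doubling an odd block interchanges the odd- and even-index positions. So the lemma reduces to the assertion that the minimal regular period of every class of discriminant $a^2+4$ is odd, which I would derive from $N(\epsilon_D) = -1$: the fundamental unit $\epsilon_D = \tfrac{a+\sqrt{a^2+4}}{2}$ has norm $-1$, and the parity of the regular period is governed by the norm of the unit realized over that period.

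With the lemma in hand, the heart of the argument is to compute $n$ itself. Let $d$ be the cycle-invariant gcd of the coefficients, $D_0 = D/d^2$, and let $m$ be the integer with $\epsilon_D = \epsilon_{D_0}^{\,m}$, i.e.\ the index $[\mathcal{O}_{D_0}^{\times} : \mathcal{O}_{D}^{\times}]$ at the level of units. Since $N(\epsilon_D) = -1$ forces $N(\epsilon_{D_0}) = -1$ and $m$ odd, it suffices to prove $n - 1 = m\,l$: then $2r+1 = m$ is a positive odd integer, giving part (1), while $m = 1 \iff d = 1$ gives part (2). For $n - 1 = m\,l$ I would argue that the rational $\tfrac{z}{A} = \tfrac{a+B}{2A}$ expanded in the definition of $\psi_{a,s}$ is the convergent of $w$ obtained by traversing one regular period $m$ times: the automorph of $f$ is the $m$-th power of the automorph of its primitive part $f/d$, its entries are the continuants assembling $z$ and $A$, so $(q_1, \ldots, q_l)$ is, as a word, the $m$-fold repetition of a single regular period of $w$. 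Its partial-quotient sum is then $m$ times one period's sum, which by the lemma equals $l$, yielding $n - 1 = m\,l$ after the single boundary correction.

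The main obstacle is exactly this last step, $n - 1 = m\,l$. Unlike $l$ and the $b_i$, the sum $n$ is genuinely finer than $w$ — two forms sharing the same $w$ but living in different discriminants $a^2+4$ can have different sums — so $n$ cannot be read off the negative-continued-fraction cycle alone, and the arithmetic of the non-maximal order must enter. Making the $m$-fold-period claim rigorous, namely tracking the continuant entries of $\epsilon_{D_0}^{m}$ and controlling the two partial quotients where the repeated block joins (which is the source of the $-1$), is the crux. A secondary, more routine obstacle is the unit-theoretic claim $m > 1 \iff d > 1$ for discriminants of the form $a^2+4$, i.e.\ that passing to a strictly smaller order strictly enlarges the fundamental unit; this should follow from an index computation but must be verified not to degenerate.
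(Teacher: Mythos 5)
First, an important point of comparison: the paper does \emph{not} prove this statement. It is Conjecture~\ref{C:divisor}, supported only by machine verification for $2 \leq n \leq 31$ and by the consistency check against the count $2^{n-2}$ of reduced forms of fixed sum. So there is no proof in the paper to measure your attempt against; if your plan were carried out it would settle something the paper leaves open. The plan itself is sensible and matches the data: setting $d$ equal to the gcd of the coefficients, $D_0 = D/d^2$, and $\epsilon_D = \epsilon_{D_0}^{\,m}$, the identity $n-1 = ml$ would give everything, since $N(\epsilon_D) = -1$ (because $(a,1)$ solves $x^2 - Dy^2 = -4$) forces $m$ odd, proving (1) with $2r+1 = m$, and (2) needs only the trivial direction $d=1 \Rightarrow D_0 = D \Rightarrow m=1$. (The converse $d>1 \Rightarrow m>1$, which you list as a secondary obstacle, is not needed for the conjecture as stated.) For instance, for the fixed form $5x^2+15xy+5y^2$ of discriminant $125 = 11^2+4$ one has $\epsilon_{125} = \epsilon_5^5$, caliber $1$, sum $6 = 5 \cdot 1 + 1$, exactly as your framework predicts.

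The genuine gap is the step you yourself call the crux, $n - 1 = ml$, and the one mechanism you offer for it does not survive scrutiny. You argue that ``the automorph of $f$ is the $m$-th power of the automorph of its primitive part $f/d$,'' but $f$ and $f/d$ have \emph{identical} automorph groups in $\SL_2(\mathbb{Z})$ --- scaling a form does not change the matrices fixing it --- so no $m$-th power is visible there; the index $m$ lives in the comparison of the orders $\mathcal{O}_D$ and $\mathcal{O}_{D_0}$, which enters only through $z = (a+B)/2$, i.e.\ through the dependence on $a$. Nor is the continuant matrix assembling $z$ and $A$, namely $N_f = \bigl(\begin{smallmatrix} z & C \\ A & (B-a)/2 \end{smallmatrix}\bigr) = \prod_i \bigl(\begin{smallmatrix} q_i & 1 \\ 1 & 0 \end{smallmatrix}\bigr)$, the $m$-th power of its counterpart for $f/d$: in the example above $N_f = \bigl(\begin{smallmatrix} 13 & 5 \\ 5 & 2 \end{smallmatrix}\bigr)$, whereas the fifth power of $N_{f/d} = \bigl(\begin{smallmatrix} 2 & 1 \\ 1 & 1 \end{smallmatrix}\bigr)$ is $\bigl(\begin{smallmatrix} 89 & 55 \\ 55 & 34 \end{smallmatrix}\bigr)$; the word $(2,1,1,2)$ is not five copies of $(1,1)$. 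What the data actually support is a twisted statement: normalizing words to odd length, $\mathrm{word}(f) = R\,P^m$ and $\mathrm{word}(f/d) = R\,P$, where $R = \bigl(\begin{smallmatrix} 1 & 1 \\ 0 & 1 \end{smallmatrix}\bigr)$ and $P$ is the continuant-matrix product over one minimal regular period of $w$ (essentially the matrix of multiplication by $\epsilon_{D_0}$ on $\mathbb{Z}w + \mathbb{Z}$), after which multiplicativity of continuant matrices gives the $m$-fold concatenation and the $+1$ boundary increment. Proving that identity --- with its sign issues coming from $N(\epsilon_{D_0}) = -1$ and the odd/even normalization --- together with your still-unproved period-sum lemma $\sum_i (b_i - 2) = l$ (whose plus/minus conversion argument must also be anchored to the order $\mathcal{O}_{D_0}$ of the \emph{primitive} part, and must rule out the minimal minus period being a proper divisor of the converted one), is the entire arithmetic content of part (1). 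Your proposal identifies the right objects but stops exactly where the work begins.
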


The number of cycles with fixed caliber and sum also shows a pattern:
\begin{conjecture}\label{C:formula}
Consider the set of all cycles of Zagier-reduced forms with discriminants of the form $a^2 +4$ and caliber $l$.  The number of such cycles with fixed sum $n$ satisfying (i) in Conjecture \ref{C:divisor} is
	\begin{equation*}
		\frac{1}{2l} \sum_{\substack{ d \mid l\\d  \, \, \mathrm{odd}}}  \mu(d) 2^{l/d},
	\end{equation*}
in which $\mu$ is the M\"{o}bius function.  In particular, the number is independent of the sum.
\end{conjecture}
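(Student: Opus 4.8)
The plan is to match caliber-$l$ cycles with a classical object whose count is exactly the displayed formula, namely the aperiodic two-colour necklaces of length $l$ in which one colour occurs an odd number of times. The rotation structure needed for necklaces is already present: by Theorem \ref{T:main} kneading is Zagier reduction, and a single reduction step cyclically advances the periodic sequence of reducing numbers $(n_1,\dots,n_l)$ of a cycle (equivalently, the period of the backward/``minus'' continued fraction of the associated quadratic surd), with each $n_i\ge 2$. Thus a caliber-$l$ cycle is literally a cyclic word of length $l$ over $\{2,3,4,\dots\}$ up to rotation, and reduction is rotation by one place. I would then pass to a binary necklace by recording which reducing numbers exceed $2$, that is $b_i = 1$ if $n_i\ge 3$ and $b_i=0$ if $n_i=2$; this is rotation-equivariant, so it sends a cycle to a well-defined necklace of length $l$. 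On the kneading examples in the excerpt this map sends the caliber-$4$, sum-$5$ cycles to the two length-$4$ Lyndon classes $0001$ and $0111$, and the caliber-$12$ cycle of $(2,2,3,6)$ to an aperiodic odd-weight word, which is encouraging. The claim to be proved is that this map is a bijection onto the aperiodic necklaces of length $l$ having an odd number of $1$'s, where odd weight should correspond to length parity $s=0$ and aperiodicity to the caliber being exactly $l$.

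Granting that bijection, the enumeration is a routine M\"obius inversion. Let $P(d)$ be the number of \emph{aperiodic} binary words of length $d$ with an odd number of $1$'s. A length-$l$ odd-weight word has a unique aperiodic block, of some length $d\mid l$, repeated $l/d$ times; since its weight is $l/d$ times the block weight, oddness forces both $l/d$ odd and the block odd-weight, giving
\begin{equation*}
	2^{\,l-1} \;=\; \sum_{\substack{d\mid l\\ l/d\ \mathrm{odd}}} P(d).
\end{equation*}
Writing $l=2^{e}m$ with $m$ odd, the divisors $d$ with $l/d$ odd are exactly $d=2^{e}d'$ with $d'\mid m$, so M\"obius inversion over the divisors of $m$ yields $P(l)=\sum_{d'\mid m}\mu(m/d)\,2^{\,2^{e}d'-1}$. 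Dividing by $l$ to pass from words to necklaces and reindexing by the complementary odd divisor gives exactly $\tfrac{1}{2l}\sum_{d\mid l,\ d\ \mathrm{odd}}\mu(d)2^{l/d}$, a quantity that by its very form involves neither $n$ nor $r$.

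The independence of the sum I would obtain separately, by reducing every case to the primitive one $r=0$, where the sum is $l+1$. Here I would use the remark following Theorem \ref{T:main}: multiplying a form's coefficients by a factor preserves the reducing numbers, hence the whole reducing-number period and the caliber, while multiplying the discriminant by a square. This exhibits each caliber-$l$ cycle with $n=(2r+1)l+1$ and $r>0$ as an imprimitive inflation of a primitive caliber-$l$ cycle of sum $l+1$, giving a caliber-preserving bijection between the $r>0$ and the $r=0$ families and, at the same time, the divisibility shape and imprimitivity asserted in Conjecture \ref{C:divisor}. In particular the condition $n=(2r+1)l+1$ is just the statement that a sum-$(l+1)$ cycle of caliber $l'$ satisfies $l'\mid l$ with $l/l'$ odd, which is precisely the index condition appearing in the M\"obius sum above; so the bookkeeping of the previous paragraph is consistent with the total $\sum_{l'\mid l,\ l/l'\,\mathrm{odd}} l'\cdot(\text{necklace count})=2^{l-1}$ of even-length compositions of $l+1$.

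The hard part, and the reason the statement is only conjectural, is proving that the pattern map $n_i\mapsto[\,n_i\ge 3\,]$ is genuinely a bijection on the $r=0$ family. Well-definedness and equivariance are easy, and injectivity within a single cycle is automatic from aperiodicity; the obstacle is injectivity across cycles, i.e.\ reconstructing the sizes of the reducing numbers $n_i\ge 3$ from the mere $2$-versus-$\ge 3$ pattern. The examples suggest every caliber-$l$ cycle of discriminant $a^2+4$ satisfies $\sum_i (n_i-3)=0$ (equivalently $\sum_i n_i = 3l$), and the proof presumably hinges on showing that this identity together with the length-$(l+1)$/minimal-sum and admissibility constraints pins the non-$2$ entries down uniquely from their positions. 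I would attack this through the Hirzebruch--Jung dictionary between the regular continued fraction governing \eqref{E:correspondingform} and the minus continued fraction governing the $n_i$, tracking how the even length of a composition is recorded as the parity of the number of $\ge 3$ entries (the needed parity lemma $s=0 \Leftrightarrow$ odd weight) and how the sum invariant constrains the excesses $n_i-2$; the principal cycle of Lemma \ref{L:principalcyclea} gives a clean calibration point $(n_i)=(2,\dots,2,\,a+2)$ against which to test the reconstruction.
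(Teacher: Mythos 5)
First, a point of status: the paper does not prove this statement at all. Conjecture \ref{C:formula} is presented as a conjecture, supported only by machine verification for $2 \leq n \leq 31$ and by matching the formula against the On-Line Encyclopedia of Integer Sequences (the cited entry A000048 is precisely the count of aperiodic binary necklaces of length $l$ with an odd number of ones). So there is no proof in the paper to compare yours against, and your proposal should be judged as a would-be new argument. Judged that way, it is a program rather than a proof. What you get right: the combinatorial reading of the displayed formula as counting aperiodic odd-weight necklaces of length $l$, and the M\"obius-inversion computation establishing that count, are both correct; and the proposed bridge --- the cyclic word of reducing numbers $(n_1,\ldots,n_l)$, all $\geq 2$, collapsed to the $0/1$ pattern recording which entries exceed $2$ --- is rotation-equivariant by Theorem \ref{T:main} and is consistent with the paper's examples. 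But, as you concede, the entire difficulty of the conjecture is concentrated in the step you leave open: that this pattern map is a bijection from the caliber-$l$, sum-$n$ cycles onto the aperiodic odd-weight necklaces of length $l$. Nothing in the proposal reduces that step, and the auxiliary facts you would lean on to reconstruct the $n_i$ from the pattern --- the parity claim that $s=0$ corresponds to odd weight, and the normalization $\sum_i n_i = 3l$ --- are themselves unproven empirical observations of exactly the same depth as the conjecture itself.

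Second, the one step you present as essentially settled is in fact wrong as stated. To get independence of the sum you claim every caliber-$l$ cycle with $r>0$ is an imprimitive inflation of a primitive caliber-$l$ cycle of sum $l+1$ inside the same family. Dividing the forms of an imprimitive cycle of discriminant $a^2+4$ by the gcd $d$ of their coefficients does preserve the reducing numbers and the caliber, but the quotient cycle has discriminant $(a^2+4)/d^2$, which in general is \emph{not} of the form $a'^2+4$. The paper's own table gives counterexamples: the caliber-$5$ cycle with $\mathrm{Sum}-1=15$ and $d=130$ has primitive part $x^2+5xy+2y^2$ of discriminant $17$, and $17$ is not $a'^2+4$ for any integer $a'$ (likewise $(1,7,3)$ of discriminant $37$, and $(2,9,2)$ of discriminant $65$). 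For such quotients the sum invariant is not even defined, so the claimed caliber-preserving bijection between the $r>0$ and $r=0$ families cannot be set up by inflation within discriminants of the form $a^2+4$. Note also that the paper's inflation remark goes the other way: multiplying by a solution of $x^2-Dy^2=4$ lands in discriminant $x^2-4$; landing in discriminant $a^2+4$ requires solvability of the negative Pell equation $a^2-Dd^2=-4$, and matching the $r>0$ cycles against primitive cycles over all such $D$ is again essentially the full content of Conjectures \ref{C:divisor} and \ref{C:formula}, not a routine reduction. The same error undercuts your parenthetical claim that this argument would also yield Conjecture \ref{C:divisor}.
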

For instance, cycles of caliber $4$ only occur with sums of the form $8r+5$, and there are exactly two caliber-$4$ cycles with each such sum.  Cycles of caliber $7$ only occur with sums of the form $14r+8$, and there are nine caliber-$7$ cycles with each such sum.

Both of the above conjectures were checked for $2 \leq n \leq 31$.   The formula in Conjecture \ref{C:formula} was produced through a search of the On-Line Encyclopedia of Integer Sequences \cite{OEIS}.

 It is well known that the number of $k$-term compositions of a positive integer $n$  is the binomial coefficient $\binom{n-1}{k-1}$.  It follows readily that there are $2^{n-2}$ Zagier-reduced forms with discriminant of the form $a^2+4$ and sum $n$, and an equal number with discriminant of the form $a^2-4$ and sum $n$.  On the other hand, Conjectures \ref{C:divisor} and \ref{C:formula} imply that the number of Zagier-reduced forms with discriminant of the form $a^2+4$ and sum $n$ is 
\begin{equation*}
	\frac{1}{2} \sum_{ \substack{ k \mid n-1 \\  k \text{ odd}}} \sum_{ \substack{ d \mid \tfrac{n-1}{k}\\ d \text{ odd}}} \mu(d) 2^{\frac{n-1}{kd}} = \frac{1}{2} \sum_{ \substack{ b \mid n-1 \\ b \text{ odd}}} \left( 2^{\frac{n-1}{b}} \sum_{c \mid b} \mu(c) \right) = 2^{n-2},
\end{equation*} 
in agreement with the known value.

Let us call a cycle of forms of discriminant $a^2+4$ with sum $n$ but caliber less than $n - 1$ a \emph{short cycle}.  Conjecture \ref{C:divisor} implies that such cycles consist of imprimitive forms.  We provide a table specifying the different short cycles with sum between $1$ and $29$.  Each row corresponds to a cycle, listed in increasing order of sum. The second column gives the cycle caliber.  The third and fourth columns together specify a form $f$ in the corresponding cycle. The third column contains $d$, the gcd of the coefficients of $f$, while the fourth column contains the triple $(A,B,C)$ where $Ax^2 + Bxy + Cy^2$ is the primitive form obtained by dividing each coefficient of $f$ by $d$.  We have chosen the representative form in each case to be that with the smallest middle coefficient $B$.

\begin{tabular}{c | c | c | c }
	$\mathrm{Sum} - 1$ & Caliber & $d$ & Form\\
	\hline
	3 & 1 & 2 & (1,3,1)\\
	5 & 1 & 5 & (1,3,1)\\
	6 & 2 & 5 & (1,4,2)\\
	7 & 1 & 13 & (1,3,1)\\
	9 & 1 & 34 & (1,3,1)\\
	9 & 3 & 10 & (1,5,3)\\
	10 & 2 & 29 & (1,4,2)\\
	11 & 1 & 89 & (1,3,1)\\
	12 & 4 & 17 & (1,6,4)\\
	12 & 4 & 37 & (2,8,3)\\
	13 & 1 & 233 & (1,3,1)\\
	14 & 2 & 169 & (1,4,2)\\
	15 & 1 & 610 & (1,3,1)\\
	15 & 3 & 109 & (1,5,3)\\
	15 & 5 & 130 & (1,5,2)\\
	15 & 5 & 26 & (1,7,5)\\
	15 & 5 & 82 & (3,11,3)\\
	17 & 1 & 1597 & (1,3,1)\\
	18 & 2 & 985 & (1,4,2)\\
	18 & 6 & 17 & (1,8,6)\\
	18 & 6 & 101 & (2,12,5)\\
	18 & 6 & 145 & (3,14,4)\\
	18 & 6 & 145 & (4,14,3)\\
	18 & 6 & 257 & (5,20,7)\\
	19 & 1 & 4181 & (1,3,1)\\
	20 & 4 & 305 & (1,6,4)\\
	20 & 4 & 1405 & (2,8,3)\\
	21 & 1 & 10946 & (1,3,1)\\
	21 & 3 & 1189 & (1,5,3)\\
	21 & 7 & 290 & (1,7,3)\\
	21 & 7 & 514 & (2,9,2)\\
	21 & 7 & 50 & (1,9,7)\\
	21 & 7 & 1154 & (4,15,5)\\
	21 & 7 & 226 & (3,17,5)\\
	21 & 7 & 226 & (5,17,3)\\
	21 & 7 & 442 & (5,25,9)\\
	21 & 7 & 362 & (5,25,13)\\
	21 & 7 & 530 & (7,27,7)\\
	22 & 2 & 5741 & (1,4,2)\\
	23 & 1 & 28657 & (1,3,1)\\
	24 & 8 & 65 & (1,10,8)\\
	24 & 8 & 197 & (2,16,7)\\
	24 & 8 & 325 & (3,20,6)\\
	24 & 8 & 325 & (6,20,3)\\
	24 & 8 & 401 & (4,22,5)\\
	24 & 8 & 401 & (5,22,4)\\
	24 & 8 & 577 & (5,30,16)\\
	24 & 8 & 677 & (5,30,11)\\
\end{tabular} \, \, \begin{tabular}{c | c | c | c }
	$\mathrm{Sum} - 1$ & Caliber & $d$ & Form\\
	\hline
	24 & 8 & 901 & (7,34,9)\\
	24 & 8 & 901 & (9,34,7)\\
	24 & 8 & 677 & (7,34,17)\\
	24 & 8 & 1025 & (8,38,13)\\
	24 & 8 & 1025 & (13,38,8)\\
	24 & 8 & 1157 & (10,40,11)\\
	24 & 8 & 1297 & (12,46,17)\\
	24 & 8 & 1765 & (13,52,18)\\
	25 & 1 & 75025 & (1,3,1)\\
	25 & 5 & 8578 & (1,5,2)\\
	25 & 5 & 701 & (1,7,5)\\
	25 & 5 & 6805 & (3,11,3)\\
	26 & 2 & 33461 & (1,4,2)\\
	27 & 1 & 196418 & (1,3,1)\\
	27 & 3 & 12970 & (1,5,3)\\
	27 & 9 & 514 & (1,9,4)\\
	27 & 9 & 82 & (1,11,9)\\
	27 & 9 & 1154 & (2,13,3)\\
	27 & 9 & 1154 & (3,13,2)\\
	27 & 9 & 442 & (3,23,7)\\
	27 & 9 & 442 & (7,23,3)\\
	27 & 9 & 3202 & (4,23,8)\\
	27 & 9 & 3202 & (8,23,4)\\
	27 & 9 & 3874 & (5,25,7)\\
	27 & 9 & 626 & (5,27,5)\\
	27 & 9 & 4610 & (6,29,11)\\
	27 & 9 & 4610 & (11,29,6)\\
	27 & 9 & 962 & (5,35,13)\\
	27 & 9 & 842 & (5,35,19)\\
	27 & 9 & 7202 & (9,35,9)\\
	27 & 9 & 1370 & (7,41,11)\\
	27 & 9 & 1370 & (11,41,7)\\
	27 & 9 & 1090 & (7,41,21)\\
	27 & 9 & 1090 & (21,41,7)\\
	27 & 9 &  1522 & (9,43,9)\\
	27 & 9 & 2026 & (11,51,13)\\
	27 & 9 & 2026 & (13,51,11)\\
	27 & 9 & 1850 & (11,51,17)\\
	27 & 9 & 2810 & (17,63,17)\\
	27 & 9 & 2402 & (17,63,23)\\
	27 & 9 & 3026 & (13,65,23)\\
	27 & 9 & 3722 & (19,75,25)\\
	27 & 9 & 3970 & (21,79,27)\\
	28 & 4 & 5473 & (1,6,4)\\
	28 & 4 & 53353 & (2,8,3)\\
	29 & 1 & 514229 & (1,3,1) \\
	& & &\\
	& & &
\end{tabular}

\subsection{Forms with discriminant of the form $a^2-4$}

The principal cycle with discriminant of the form $a^2-4$ consists of the single form $x^2 + axy + y^2$ of sum $a$.  The corresponding kneading cycle consists of the single sequence $(a)$.  In addition, for $a \geq 4$, the form 
\begin{equation}\label{E:specialform}
	(a-2) x^2 + (3a-6) xy + (2a-5) y^2
\end{equation}
of discriminant $a^2-4$ always lies in a cycle of caliber $a-2$.  A simple way to see this is to note that the corresponding sequence is $(2,a-3,1)$ and that the kneading cycle for this sequence follows a simple pattern
\begin{align*}
	(2,a-3,1) &\mapsto (1,a-3,2) \mapsto (1,a-4,1,1,1) \mapsto (1,a-5,1,2,1) \mapsto \cdots \\	&\mapsto (1,1,1,a-4,1) \mapsto (2,a-3,1).
\end{align*}
Let us denote by $\mathfrak{c}$ the class of forms corresponding to this cycle.  Because it contains both the form \eqref{E:specialform} and its opposite (obtained by switching the coefficients of $x^2$ and $y^2$), it has order two in the class group.

\begin{lemma}
If $n$ is the sum of a class with discriminant of the form $a^2 - 4 > 0$, then $n \leq a$, with equality if and only if the class is principal or equal to $\mathfrak{c}$.
\end{lemma}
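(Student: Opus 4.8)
The plan is to mirror the structure of the previous lemma (the $a^2+4$ case), since both bound the sum $n$ of a class in terms of $a$ using the identity $a = [q_1,\ldots,q_l] - [q_2,\ldots,q_{l-1}]$ and the continuant inequality $[c_1,\ldots,c_r] \geq c_1 + \cdots + c_r$. The key difference is that here the discriminant is $a^2 - 4 = a^2 + (-1)^s \cdot 4$ with $s = 0$, so $l$ is even, but the alternant satisfies $[q_1,\ldots,q_l] - [q_2,\ldots,q_{l-1}] = a$ exactly as before; only the target discriminant formula changes. First I would handle the degenerate short cases $l = 2$ directly: if $l = 2$ the alternant is $q_1 q_2 = a$ and $n = q_1 + q_2 \leq q_1 q_2 = a$ by the AM-GM-type inequality $q_1 + q_2 \leq q_1 q_2$ for positive integers (strict unless one of them is $1$), with equality forcing $\{q_1,q_2\} = \{1,a\}$ or $q_1 = q_2$; I would check that equality $n = a$ pins down exactly the sequence $(a)$ after accounting for even length, recovering the principal cycle.

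For $l \geq 4$, I would expand $a$ via \eqref{E:contrecursion} exactly as in \eqref{E:alternantbound}, obtaining
\begin{equation*}
	a = (q_1 q_l - 1)[q_2,\ldots,q_{l-1}] + q_1[q_2,\ldots,q_{l-2}] + q_l[q_3,\ldots,q_{l-1}] + [q_3,\ldots,q_{l-2}].
\end{equation*}
Applying the continuant lower bound $[c_1,\ldots,c_r] \geq c_1 + \cdots + c_r$ term by term, the generic case $q_1 q_l \geq 2$ (i.e.\ not both $q_1 = q_l = 1$) should yield $a > n$ strictly, just as in the $a^2 + 4$ lemma. The crux, then, is the boundary case $q_1 = q_l = 1$: here the $(q_1 q_l - 1)$ coefficient vanishes, the estimate weakens, and I expect the inequality to degrade to $a \geq n$ or $a = n$ for a distinguished family. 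I anticipate that the sequence $(2,a-3,1)$ — equivalently its kneading-cycle representative and corresponding class $\mathfrak{c}$ — is precisely where equality $n = a$ is attained among the $l \geq 4$ sequences, so I would compute the alternant and sum of $(2,a-3,1)$ explicitly to confirm $a = n$ there, then argue that no other $q_1 = q_l = 1$ configuration achieves equality.

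The main obstacle will be the careful analysis of the $q_1 = q_l = 1$ subcase for $l \geq 4$, where I must show that equality $n = a$ forces the sequence into the cycle of $\mathfrak{c}$ and nothing else. Unlike the $a^2+4$ lemma — where the $q_1 = q_l = 1$, $l = 4$ case gave the clean gap $a = n - 1$ — here I expect the corresponding estimate to be tight, so I will need to track the inequality $[c_1,\ldots,c_r] \geq c_1 + \cdots + c_r$ and determine exactly when each application is an equality (which happens only for one-term continuants or when interior entries force the continuant to collapse). I would split according to $l = 4$ versus $l \geq 6$: for $l \geq 6$ the presence of genuine interior continuants $[q_3,\ldots,q_{l-2}]$ with multiple terms should force $a > n$ strictly via the same reasoning as the earlier lemma, while the $l = 4$ case reduces to analyzing $a = [1, q_2, q_3, 1] - [q_2, q_3]$ and extracting when $n = q_2 + q_3 + 2$ equals $a$. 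Once the equality cases are isolated to $(a)$ and the cycle of $\mathfrak{c}$, the lemma follows by identifying these with the principal class and $\mathfrak{c}$ as described in the surrounding text.
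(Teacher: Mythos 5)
Your proposal founders on the length parity: for discriminant $a^2 - 4$ the correct parameter is $s = 1$, not $s = 0$, so the sequences in play have \emph{odd} length. This follows from the paper's convention that $Z_{a,s}$ consists of forms of discriminant $a^2 + (-1)^s \cdot 4$, and you can check it directly: the form attached to a length-two sequence $(q_1, q_2)$ is $q_2 x^2 + (q_1 q_2 + 2)xy + q_1 y^2$, of discriminant $(q_1 q_2)^2 + 4$, so even-length sequences belong to the previous lemma, not this one. Consequently your entire case decomposition ($l = 2$, then $l \geq 4$ split into $l = 4$ and $l \geq 6$) analyzes sequences that never arise for this discriminant, while omitting the cases that do: $l = 1$ (the principal class, whose kneading cycle is the single sequence $(a)$), $l = 3$, and $l \geq 5$. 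The error is not cosmetic: for even-length sequences the bound $n \leq a$ is actually \emph{false}, since the principal cycle of alternant $a$ and parity $0$ has sum $a+1$ by Lemma \ref{L:principalcyclea}. (Your $l = 2$ step also misstates the inequality $q_1 + q_2 \leq q_1 q_2$, which fails precisely when an entry equals $1$; e.g.\ $(1,a)$ has sum $a+1$.) There were internal warning signs you should have caught: the equality witness you anticipate, $(2, a-3, 1)$, has length three, so it fits neither your $l = 2$ case nor your $l \geq 4$ case, and no even-length analysis can ``pin down'' the length-one sequence $(a)$.

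The correct skeleton, which the paper follows, is this. For $l = 1$ the sequence is $(a)$ and the claim is immediate. For $l = 3$ the alternant identity gives $a = (q_1 q_3 - 1) q_2 + q_1 + q_3$; since $q_1 q_3 = 1$ is impossible (it would force $a^2 - 4 = 0$, excluded by hypothesis), this yields $a \geq q_1 + q_2 + q_3 = n$, with equality exactly when $q_1 q_3 = 2$, i.e.\ for the sequences $(2, a-3, 1)$ and $(1, a-3, 2)$. For $l \geq 5$ one applies the expansion \eqref{E:alternantbound} (here your instinct was right) together with $[c_1, \ldots, c_r] \geq c_1 + \cdots + c_r$ to get $a \geq n$, with equality forcing $l = 5$ and $q_1 = q_3 = q_5 = 1$. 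Finally, the equality sequences so isolated --- $(2, a-3, 1)$, $(1, a-3, 2)$, and the length-five sequences $(1, j, 1, k, 1)$ with $j + k = a-3$ --- are exactly the members of the kneading cycle of $\mathfrak{c}$ displayed before the lemma, which identifies the equality classes as the principal class and $\mathfrak{c}$ and completes the proof.
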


\begin{proof}
Suppose $\OR{q} = (q_1, \ldots, q_l)$ is the sequence corresponding to a form in the class, so $l$ is odd.  If $l = 1$, then the sequence is $(a)$, in the principal kneading cycle, and the lemma is immediate. Otherwise, if $l=3$, then $a = (q_1 q_3 - 1) q_2 + q_1 + q_3$.  We cannot have $q_1 q_3 = 1$, since in this case $a^2 - 4 = 0$, which we have excluded from consideration.  Thus, $a \geq q_1 + q_2 + q_3 = n$, with equality only when $q_1 q_3 = 2$.

Finally, if $l \geq 5$, then the equalities \eqref{E:alternantbound} are valid and we have
\begin{align*}
	a &\geq q_1 \left[ q_2, \ldots, q_{l-2} \right] + q_l \left[ q_3, \ldots, q_{l-1} \right] + (q_3 + \cdots + q_{l-2}) \\
	&\geq q_1 (q_2 + 1) + (q_3 + \cdots + q_{l-2}) + q_l (q_{l-1} + 1)\\
	&\geq q_1 + \cdots + q_{l} = n,
\end{align*}
with equality only occurring with $l=5$ and $q_1 = q_3 = q_5 = 1$.

The lemma now follows from the description of the kneading cycle corresponding to the class $\mathfrak{c}$ given above.
\end{proof}

The calibers of cycles with discriminant of the form $a^2-4$ and fixed sum exhibit large variation.  However, these cycles pair off in such a way that the sum of the calibers of the cycles in each pair exhibits regularity.  Conjecture \ref{C:composition} states that the pairing arises from the class group structure.

\begin{conjecture}\label{C:composition}
For fixed $a \geq 3$, let $\mathfrak{c}$ be the class of discriminant $a^2-4$ described above.  Let $\mathfrak{c}_1$ and $\mathfrak{c}_2$ be primitive classes whose Gauss composition equals $\mathfrak{c}$, and let $l_1$ and $l_2$ be the calibers of their cycles of reduced forms.   Then 
\begin{enumerate}
	\item $\mathfrak{c}_1$ and $\mathfrak{c}_2$ have the same sum $n \leq a$,
	\item $l_1 + l_2 = n-1$.
\end{enumerate}
\end{conjecture}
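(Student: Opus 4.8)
The plan is to first recast the hypothesis group-theoretically. Since $\mathfrak{c}$ has order two, the requirement $\mathfrak{c}_1 \mathfrak{c}_2 = \mathfrak{c}$ is equivalent to $\mathfrak{c}_2 = \mathfrak{c}\,\mathfrak{c}_1^{-1}$, so the statement is really about the involution $\iota(\mathfrak{g}) = \mathfrak{c}\,\mathfrak{g}^{-1}$ on the class group of discriminant $a^2-4$: part (i) asserts that $\iota$ preserves the sum invariant, and part (ii) that the calibers of $\mathfrak{g}$ and $\iota(\mathfrak{g})$ add up to (sum) $-1$. I would record at the outset two facts already furnished by Theorem \ref{T:main}: taking inverses corresponds to reversing the kneading sequence (by the continuant symmetry $[q_1,\ldots,q_l]=[q_l,\ldots,q_1]$), so inversion preserves both the sum and the caliber; and the orbit $\{\mathbf{1},\mathfrak{c}\}$ of $\iota$ realizes the maximal sum $n=a$, with calibers $1$ and $a-2$ by Lemma \ref{L:principalcyclea} and the description of $\mathfrak{c}$. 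This already gives $1+(a-2)=a-1=n-1$ and serves as a base case.

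For part (i), because inversion preserves the sum, it suffices to prove that multiplication by the \emph{fixed} order-two class $\mathfrak{c}$ preserves the sum invariant; then $\iota$, being a composite of two sum-preserving maps, preserves it as well. The natural route is to translate Gauss composition with $\mathfrak{c}$ into an operation on kneading sequences through $\phi_{a,s}$ and $\psi_{a,s}$. Concretely, I would take Dirichlet's ``united forms'' description of composition, apply it to the completely explicit representative of $\mathfrak{c}$ with form \eqref{E:specialform} and sequence $(2,a-3,1)$, and read off the effect on the continuants in \eqref{E:correspondingform}. The aim is to exhibit composition-with-$\mathfrak{c}$ as a reversible surgery on the sequence that manifestly preserves the entry sum; the regular pattern of the $\mathfrak{c}$-cycle makes this plausible, though the composition bookkeeping is delicate.

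For part (ii) I would pass to the automorph viewpoint. Over one reduction cycle of caliber $l$, the product of the reduction matrices $\bigl[\begin{smallmatrix} n_i & 1\\ -1 & 0\end{smallmatrix}\bigr]$ equals the fundamental automorph of the form, whose trace is the Pell number $t$ with $t^2 - Du^2 = 4$; for $D=a^2-4$ the minimal solution is $(t,u)=(a,1)$, so every primitive cycle satisfies $\operatorname{tr}\prod_{i=1}^{l}\bigl[\begin{smallmatrix} n_i & 1\\ -1 & 0\end{smallmatrix}\bigr] = a$, the reducing numbers $n_i$ being the leading entries of the cyclically kneaded sequences plus one via \eqref{E:reducingnumber}. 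The identity $l_1+l_2=n-1$ should then emerge by comparing the two cyclic words attached to $\mathfrak{c}_1$ and $\mathfrak{c}_2=\mathfrak{c}\,\mathfrak{c}_1^{-1}$: the composition relation expresses the automorph of $\mathfrak{c}$ as a product of those of $\mathfrak{c}_1$ and $\mathfrak{c}_2$, and I would try to show that the concatenated cyclic word of length $l_1+l_2$ is exactly a cutting sequence whose total length is controlled by the common sum $n$. Equivalently, one might seek a purely combinatorial proof inside an analogous necklace model of the kind suggested by the M\"obius formula of Conjecture \ref{C:formula}, where calibers are orbit sizes under a fixed rotation and $l_1+l_2=n-1$ becomes a complementation bijection between the two orbits.

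The main obstacle, shared by both parts, is that the dictionary of Theorem \ref{T:main} encodes only \emph{reduction} (kneading), not \emph{composition}: nothing in the present framework says which sequence operation realizes Gauss composition. Supplying that dictionary --- even only for composition by the single class $\mathfrak{c}$ --- is the crux, and I expect it to be the hardest and least routine step. Once it is available, both the sum-invariance of $\iota$ and the caliber identity should reduce to continuant manipulations of the type carried out in Section \ref{S:proofs}, so I would concentrate essentially all effort on the composition-to-sequences dictionary and treat the remainder as bookkeeping.
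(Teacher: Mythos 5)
This statement is a \emph{conjecture} in the paper: the author supplies no proof, only empirical evidence and two verified families --- the pair $(\mathfrak{c}_1,\mathfrak{c}_2)=(\text{principal class},\mathfrak{c})$ with calibers $1$ and $a-2$ and common sum $a$, and the pair $\mathfrak{c}_1=\mathfrak{c}_2=\tilde{\mathfrak{c}}$, where $\tilde{\mathfrak{c}}$ is the class of $(2k-1)x^2+(2k^2+1)xy+(k^2-k+1)y^2$, whose kneading cycle is computed explicitly to have caliber $k$ and sum $2k+1$. Your proposal does not prove the statement either, and you say so yourself: everything is made to rest on a ``composition-to-sequences dictionary'' (a sequence-level realization of Gauss composition by $\mathfrak{c}$) which you identify as the crux but do not supply. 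That is the genuine gap. What you have established is essentially the paper's first base case, $1+(a-2)=a-1$; the reductions surrounding it (inversion preserves sum and caliber via $[q_1,\ldots,q_l]=[q_l,\ldots,q_1]$, so it suffices to treat multiplication by the fixed class $\mathfrak{c}$) are correct and worthwhile observations, but they reformulate the conjecture rather than prove it.

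Beyond the declared gap, one concrete step in your part (ii) would fail as stated: Gauss composition does \emph{not} express ``the automorph of $\mathfrak{c}$ as a product of those of $\mathfrak{c}_1$ and $\mathfrak{c}_2$.'' The fundamental automorphs of forms in different classes of the same discriminant are distinct matrices, conjugate in $\SL_2(\mathbb{Z})$ and sharing the trace $a$, but there is no identity factoring one as a product of the other two; composition acts on classes, not compatibly on the automorphism groups of individual forms. Consequently the trace constraint $\operatorname{tr}\prod_i\bigl(\begin{smallmatrix} n_i & 1\\ -1 & 0\end{smallmatrix}\bigr)=a$ constrains each cycle separately and cannot, by itself, couple $l_1$ with $l_2$; some genuinely new input linking the two cycles is needed, which is again the missing dictionary. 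A useful sanity check for any candidate dictionary you construct: it must reproduce the paper's second verified family, i.e.\ send the cycle of $\tilde{\mathfrak{c}}$ (through the sequence $(k,k-1,2)$) to itself under $\mathfrak{g}\mapsto\mathfrak{c}\,\mathfrak{g}^{-1}$, with $k+k=(2k+1)-1$.
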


The above discussion shows that the conjecture is true when $\mathfrak{c}_1$ and $\mathfrak{c}_2$ are $\mathfrak{c}$ and the principal class.  To validate the conjecture for another infinite collection of examples, consider, for $k \geq 3$ with $k \not\equiv 2 \pmod{3}$, the primitive form 
\begin{equation*}
	f = (2k-1)x^2 + (2k^2+1) xy + (k^2-k+1) y^2,
\end{equation*}
which has discriminant $(2k^2-2k+3)^2-4$. Denote its equivalence class by $\tilde{\mathfrak{c}}$. It can be shown that $\tilde{\mathfrak{c}}^2 = \mathfrak{c}$. (For instance, set $f=g=k$ and $h=k^2+1$ in \cite[Theorem 2.11]{fL2014}.  The composition will be one reduction step away from the form \eqref{E:specialform}, and the reducing number is $1$.)

It is readily checked that the corresponding sequence is $(k,k-1,2)$, so that $f$ has sum $2k+1$, which does not exceed $a=2k^2-2k+3$.   We can, in fact, describe the entire kneading cycle:
\begin{align*}
	(k,k-1,2) &\mapsto (1,k-2,1,1,k) \mapsto (1,k-3,1,1,k-1,1,1) \mapsto (1,k-4,1,1,k-1,2,1) \cdots\\
	&\mapsto (1,1,1,1,k-1,k-3,1) \mapsto (2,1,k-1,k-2,1) \mapsto (k,k-1,2),
\end{align*}
which we see has caliber $k$.  This confirms Conjecture \ref{C:composition} when $\mathfrak{c}_1 = \mathfrak{c}_2 = \tilde{\mathfrak{c}}$.

It is readily checked that the only primitive forms that are fixed by reduction are those of the form $x^2 + axy + y^2$.  It follows that in Conjecture \ref{C:composition} we have $2 \leq l_1, l_2 \leq n-3$ unless one of the classes $\mathfrak{c}_1$ and $\mathfrak{c}_2$ is the principal class.

\section{Acknowledgement}
The author would like to thank Benjamin Dickman, Franz Lemmermeyer, and his father, Weldon Smith, for helpful comments during the preparation of this manuscript.


\begin{thebibliography}{99}




\bibitem{HMW1990}
K. Hardy, J. B. Muskat, and K. S. Williams, {A deterministic algorithm for solving $n=fu^2+gv^2$ in coprime integers $u$ and $v$}, \emph{Math. Comp.} \textbf{55} (1990), 327--343.

\bibitem{fL2014}
F. Lemmermeyer, \emph{Binary Quadratic Forms: an Elementary Approach to the Arithmetic of Elliptic and Hyperelliptic Curves}, in preparation.

\bibitem{gL1984} 
G. Lachaud, Calibre et fonction z\'{e}ta des corps quadratiques r\'{e}els, \emph{Pr\'{e}publication de l'Universit\'{e} de Nice} \textbf{63} (1984). 

\bibitem{gL1987}
G. Lachaud, On real quadratic fields, \emph{Bull. Amer. Math. Soc.} \textbf{17} (1987), 307--311.

\bibitem{tM1878}
T. Muir, Letter from Mr. Muir to Professor Sylvester on the word continuant, \emph{Amer. J. Math.} \textbf{1} (1878), 344.

\bibitem{OEIS}
OEIS Foundation Inc. (2011), The On-Line Encyclopedia of Integer Sequences, \texttt{http://oeis.org/A000048}

\bibitem{PS1992}
 J. van der Poorten and J. Shallit, {Folded continued fractions}, \emph{J. Number Theory} \textbf{40} (1992), 237--250.

\bibitem{Wolfram}
Wolfram Research, Inc., Mathematica, Version 9.0, Champaign, IL (2012).

\bibitem{dZ1981}
D. B. Zagier, \emph{Zetafunktionen und quadratische K\"{o}rper: eine Einf\"{u}hrung in die h\"{o}here Zahlentheorie}, Springer-Verlag, 1981.

\end{thebibliography}
\end{document}